\newcommand{\simplestyle}[1]{#1}\newcommand{\journalstyle}[1]{}\newcommand{\ejorstyle}[1]{}
\newcommand{\TITLE}{Parallel Newton methods for the continuous
	quadratic knapsack problem: A Jacobi and Gauss-Seidel tale}

\newcommand{\TITLERUNNING}{Parallel Newton methods for the continuous
	quadratic knapsack problem}

\newcommand{\FUNDING}{This work was supported by
	CEPID-CeMEAI (FAPESP 2013/07375-0),
	FAPESP (grants 2023/08706-1
	and
	2024/12967-8), and
	CNPq (grants
    312394/2023-3
	and
	302520/2025-2).
}

\newcommand{\ABSTRACT}{The continuous quadratic knapsack (CQK) problem
involves minimizing a diagonal convex quadratic function subject to box
constraints and a single linear equality constraint. It has numerous
applications in resource allocation, multicommodity flow, machine learning,
and classical optimization tasks such as Lagrangian relaxation and
quasi-Newton updates. In this work, we revisit the semismooth Newton method
introduced by Cominetti, Mascarenhas, and Silva. We demonstrate that the
method can be significantly improved in two directions. First, for projections
onto the simplex or the $\ell_1$-ball, it can incorporate Condat's highly
effective initial multiplier guess. Second, it can serve as a flexible
foundation for CQK algorithms, allowing for different parallel variants
tailored to exploit CPU and GPU computational models. These improvements are
implemented in the open-source Julia package \texttt{NewtonCQK.jl}. We present
extensive numerical tests comparing this implementation with other
state-of-the-art solvers, demonstrating its superior efficiency and
scalability.}

\simplestyle{
\documentclass[a4paper]{article}

\usepackage[utf8]{inputenc}
\usepackage[T1]{fontenc}    % Allow search and copy in PDF files
\usepackage[english]{babel}
\usepackage{amsfonts}
\usepackage{amsmath}
\usepackage{amsthm}
\usepackage{amssymb}
\usepackage{a4}
\usepackage{enumerate}
\usepackage{graphicx}
\usepackage{setspace}
\usepackage{xcolor}
\usepackage{tabularx}
\usepackage{lipsum}
\usepackage{subfigure}
\usepackage{algorithm}
\usepackage[noend]{algpseudocode}
\usepackage{lscape}
\usepackage{pgfplots}
\usepackage{url}
\usepackage[colorlinks=true, citecolor=blue]{hyperref}

\usepackage[margin=3cm]{geometry}
\usepackage{longtable}

\usepackage[natbib=true, style=apa, backend=biber,sorting=nyt]{biblatex}
\let\cite\citep

\addbibresource{refs.bib}

\newtheorem{definition}{Definition}[section]
\newtheorem{theorem}{Theorem}[section]
\newtheorem{corollary}{Corollary}[section]
\newtheorem{exemplo}{Example}[section]
\newtheorem{lemma}{Lemma}[section]
\newtheorem{remark}{Remark}[section]

\title{\TITLE}

\author{
L. D. Secchin
\thanks{Department of Applied Mathematics, Federal University of Esp\'{i}rito
Santo, ES, Brazil. Email: {\tt leonardo.secchin@ufes.br}}
\and
P. J. S. Silva
\thanks{Department of Applied Mathematics, Universidade Estadual de Campinas,
Campinas, SP, Brazil. Email: {\tt pjssilva@ime.unicamp.br}
}
}

\pretolerance=150
\tolerance=5000
}

\journalstyle{
\documentclass[numbers,webpdf,imaiai]{ima-authoring-template}

\theoremstyle{thmstyletwo}%
\newtheorem{theorem}{Theorem}%  meant for continuous numbers
\newtheorem{definition}{Definition}
\newtheorem{corollary}{Corollary}
\newtheorem{exemplo}{Example}
\newtheorem{lemma}{Lemma}
\newtheorem{remark}{Remark}

\usepackage{graphicx}
\usepackage{xcolor}
\usepackage{tabularx}
\usepackage{subfigure}
\usepackage{algorithm}
\usepackage{lscape}
\usepackage{url}
\usepackage{longtable}

\algtext*{EndIf}% Remove "end if" text
\algtext*{EndFor}% Remove "end for" text
\algtext*{EndWhile}% Remove "end while" text

\usepackage[natbib=true, style=apa, backend=biber,sorting=nyt]{biblatex}
\let\cite\citep

\addbibresource{refs.bib}
}

\ejorstyle{
\documentclass[final,1p,times,11pt,nonatbib]{elsarticle}
\usepackage{amssymb}
\usepackage{amsmath}
\usepackage{amsthm}

\usepackage[natbib=true, style=apa, backend=biber,sorting=nyt]{biblatex}
\let\cite\citep

\addbibresource{refs.bib}

\usepackage[utf8]{inputenc}
\usepackage[T1]{fontenc}    % Allow search and copy in PDF files
\usepackage[english]{babel}
\usepackage{a4}
\usepackage{enumerate}
\usepackage{graphicx}
\usepackage{svg}
\usepackage{setspace}
\usepackage{xcolor}
\usepackage{tabularx}
\usepackage{subfigure}
\usepackage{algorithm}
\usepackage[noend]{algpseudocode}
\usepackage{tabularx}
\usepackage{pgfplots}
\usepackage{url}
\usepackage[colorlinks=true, citecolor=blue]{hyperref}

\usepackage[dvipsnames]{xcolor}
\usepackage[color=Goldenrod,textsize=small]{todonotes}

\newtheorem{theorem}{Theorem}[section]

\newtheorem{lemma}{Lemma}[section]

\onehalfspacing

\journal{European Journal of Operational Research}
}

\newcommand{\R}{\mathbb{R}}
\newcommand{\sgn}{{\normalfont \textrm{sgn}\,}}

\graphicspath{{figures/}}

\begin{document}
\selectlanguage{english}

\journalstyle{
\DOI{DOI HERE}
\copyrightyear{2025}
\vol{00}
\pubyear{2025}
\access{Advance Access Publication Date: Day Month Year}
\appnotes{Paper}
\copyrightstatement{Published by Oxford University Press on behalf of the Institute of Mathematics and its Applications. All rights reserved.}
\firstpage{1}

%\subtitle{Subject Section}

\title[\TITLERUNNING]{\TITLE}

\author{L. D. Secchin\ORCID{0000-0002-9224-9051}
	\address{\orgdiv{Department of Applied Mathematics}, \orgname{Federal University of Esp\'{i}rito Santo}, \orgaddress{%\street{Street}, \postcode{Postcode},
			\state{ES}, \country{Brazil}}}}
\author{P. J. S. Silva*\ORCID{0000-0003-1340-965X}
	\address{\orgdiv{Department of Applied Mathematics}, \orgname{Universidade Estadual de Campinas}, \orgaddress{%\street{Street}, \postcode{Postcode},
			\state{SP}, \country{Brazil}}}}

\authormark{L. D. Secchin and P. J. S. Silva}

\corresp[*]{Corresponding author: \href{email:pjssilva@unicamp.br}{pjssilva@unicamp.br}}

\received{Date}{0}{Year}
\revised{Date}{0}{Year}
\accepted{Date}{0}{Year}

%\editor{Associate Editor: Name}

\abstract{\ABSTRACT}
\keywords{Continuous quadratic knapsack; Parallel Newton; Condat's algorithm; Projection onto simplex.}

% \boxedtext{
% \begin{itemize}
% \item Key boxed text here.
% \item Key boxed text here.
% \item Key boxed text here.
% \end{itemize}}

\maketitle
}

\simplestyle{
\maketitle
\begin{abstract}
\ABSTRACT
\end{abstract}
}

\ejorstyle{
\begin{frontmatter}
%% Title, authors and addresses

%% use the tnoteref command within \title for footnotes;
%% use the tnotetext command for theassociated footnote;
%% use the fnref command within \author or \affiliation for footnotes;
%% use the fntext command for theassociated footnote;
%% use the corref command within \author for corresponding author footnotes;
%% use the cortext command for theassociated footnote;
%% use the ead command for the email address,
%% and the form \ead[url] for the home page:

\title{\TITLE\tnoteref{label1}}

\author{L. D. Secchin}
\ead{leonardo.secchin@ufes.br}
%\ead[url]{home page}
%\fntext[refleo]{}
%\cortext[corleo]{}
\affiliation{organization={Department of Applied Mathematics, Federal University of Espírito Santo},
	addressline={Rodovia Governador Mário Covas, Km 60},
	city={São Mateus},
%	postcode={29932-540},
	state={ES},
	country={Brazil}}

\author{P. J. S. Silva\corref{corpaulo}}%\fnref{refpaulo}}
\ead{pjssilva@ime.unicamp.br}
%\ead[url]{home page}
%\fntext[refpaulo]{}
\cortext[corpaulo]{Corresponding author}
\affiliation{organization={Department of Applied Mathematics, Universidade Estadual de Campinas},
	addressline={Rua Sérgio Buarque de Holanda, 651},
	city={Campinas},
%	postcode={13083-859},
	state={SP},
	country={Brazil}}

%% use optional labels to link authors explicitly to addresses:
%% \author[label1,label2]{}
%% \affiliation[label1]{organization={},
	%%             addressline={},
	%%             city={},
	%%             postcode={},
	%%             state={},
	%%             country={}}
%%
%% \affiliation[label2]{organization={},
	%%             addressline={},
	%%             city={},
	%%             postcode={},
	%%             state={},
	%%             country={}}

%% Abstract
\begin{abstract}
\ABSTRACT
\end{abstract}

%%Graphical abstract
%\begin{graphicalabstract}
%	%\includegraphics{grabs}
%\end{graphicalabstract}

%%Research highlights
%\begin{highlights}
%	\item Research highlight 1
%	\item Research highlight 2
%\end{highlights}

%% Keywords
\begin{keyword}
%The first keyword should be selected from the list of EJOR Keywords (https://www.sciencedirect.com/journal/european-journal-of-operational-research/publish/guide-for-authors#toc-38)
Large scale optimization \sep
%
% A KEYWORD ACIMA DEFINIRÁ O EDITOR. VER INSTRUÇÕES NO LINK ACIMA
%
%Please include up to 4 additional keywords of your choice
Continuous quadratic knapsack \sep Parallel Newton \sep Condat's algorithm \sep
Projection onto simplex
%% MSC codes here, in the form: \MSC code \sep code
%% or \MSC[2008] code \sep code (2000 is the default)
\MSC 65K05 \sep 90C20 \sep 90C06
\end{keyword}

\end{frontmatter}
}

\section{Introduction}

The nonlinear knapsack problem is a classical model in operations research and
finance;
see~\citet{Pang1980,Bitran1981,Amaral2006,Bretthauer2002,Patriksson2008,Sharkey2011}
and references therein. In its most general form, it can be stated as 
\begin{equation*}
	\min_x \ f(x) \quad \text{s.t.}\quad g(x) \leq b,\ x \in S,
\end{equation*}
where $f, g: \R^n \mapsto \R$, $b$ is a real constant, and $S$ is a nonempty
closed set. That is, it is a nonlinear optimization problem with a single
inequality constraint and abstract constraints. When the functions are
separable and the abstract constraint $S$ represents a box, the model is also
known as the continuous nonlinear resource allocation
problem~\cite{Patriksson2008}. Such models find applications in hierarchical
planning systems~\cite{Bitran1981}, portfolio selection, resource
allocation~\cite{Bretthauer1995,Bretthauer1997}, video on-demand
services~\cite{Patriksson2008}, batching~\cite{Patriksson2008}, capacity and
production planning~\cite{Bretthauer2002}, multicommodity
flows~\cite{Ventura1991}, and support vector
machines~\cite{Dai2005,Gonzalez2011,Cominetti2014}, among many others.

In this paper, we consider the particular case known as the \emph{Continuous
Quadratic Knapsack problem} (CQK)~\cite{Dai2005,Cominetti2014}, defined as
\begin{equation}
    \min_x \ \frac{1}{2} x^t Dx -a^t x \quad
    \text{s.t.} \quad
    b^tx = r, \quad l\leq x\leq u,
\tag{CQK}\label{CQK}
\end{equation}
where $x \in \R^n$, $D = \text{diag}(d_1, \ldots, d_n)$ is a positive diagonal
matrix, $l \leq u$ are the lower and upper vector bounds on $x$, respectively.
Some bounds may be $\pm \infty$. We assume throughout the text that $b > 0$.
This condition can be satisfied by eliminating variables or reverting their
sign where necessary. Since $D$ is positive definite, \eqref{CQK} is
equivalent to the Euclidean projection of $D^{-1}a$ onto the feasible region.
It encompasses the problem of projecting a point $y$ onto the $r$-simplex:
\begin{equation}\label{PDelta}\tag{proj$_\Delta$}
	\min\ \frac{1}{2} \| x - y \|^2 \quad
	\text{s.t.}\quad x \in \Delta = 
	\left\{x \in \R^n\: \bigg|\: \sum_{i = 1}^n x_i = r,\ x \geq 0 \right\}.
\end{equation}
Indeed, \eqref{PDelta} is, up to a constant in the objective function, the
special case of \eqref{CQK} with $D=I$, $a=y$, $b_i=1$, $l_i=0$, and
$u_i=\infty$ for all $i$. Furthermore, this problem is intrinsically linked
to the projection of $y$ onto an $\ell_1$-ball of radius $r$
\begin{equation*}
  \mathcal B = \left\{x\in \R^n \: \bigg| \: \sum_{i=1}^n |x_i| \leq r \right\},
\end{equation*}
which can be reduced to the projection of the vector of absolute values of
$y$ onto the unit simplex, subsequently restoring the original
signs~\cite{Duchi2008}. Note that~\eqref{CQK} and its variants appear
naturally when solving a continuous version of the general knapsack problem
using methods based on projected gradient methods, when the functional
constraint is linear. They also appear in the subproblems of algorithms that
linearize a nonlinear constraint, such as sequential quadratic
programming~\cite{Nocedal2006}. In turn, the continuous knapsack problems
appear as relaxations used in branch-and-bound methods needed when integrality
is required~\cite{Bretthauer2002}.

Many methods to solve \eqref{CQK}, and especially to project onto $\Delta$ or
$\mathcal B$, have emerged in the literature. The most prominent ones employ
variable-fixing techniques, originally proposed in~\citet{Michelot1986}. In
particular, to the best of our knowledge, \citet{Condat2016} introduced the
fastest variable-fixing (sequential) algorithm for projecting onto $\Delta$. 

Later, \citet{Dai2024} introduced a general framework to parallelize
variable-fixing methods. They proved that if a variable is fixed at a bound
within a subproblem, formed by a subset of the coordinates, it remains fixed
at the same bound in the original problem. This property allows for the
estimation of most of the fixed variables by splitting the full problem into
smaller subproblems that are solved in parallel. After this initial phase, the
original problem is solved in a reduced form with a significant portion of its
variables already fixed. This technique was used successfully to parallelize
Condat's method, resulting in a very efficient implementation.

Another approach, introduced in~\citet{Cominetti2014}, uses the fact
that~\eqref{CQK} is a strictly convex problem with simple bounds. Hence, it
can be solved through its dual. The authors proposed a globally convergent
semismooth Newton method to solve the dual problem, leveraging its simple
structure. The method is also very efficient and can be combined with
variable-fixing strategies.

In this paper, we present improvements to the semismooth Newton method with
optional variable fixing. First, we show that it is possible to adapt the
Gauss-Seidel strategy used by Condat, thereby improving performance. We also
demonstrate that the resulting method is directly parallelizable without using
Dai and Chen's approach. This is possible because our algorithm employs the
dual function to drive convergence instead of relying on variable-fixing for
that purpose. However, while the indirection required to implement
variable-fixing works well on CPUs, it is not well suited for GPU
implementations. This suggests an alternative GPU variation of the algorithm
that entirely removes variable-fixing, resulting in a method with a fully
decoupled iteration, similar to the Jacobi method for linear systems.

Such developments lead to a new high-performance Julia implementation that is
openly available. It includes sequential, parallel, and GPU implementations of
the semismooth Newton method, together with specialized variants to project
onto $\Delta$ and $\mathcal B$. The sequential algorithm for~\eqref{PDelta}
generally outperforms even the original C implementation of Condat's method.
Furthermore, the parallel Newton code outperforms the parallel version of
Condat's algorithm by Dai and Chen. Finally, the GPU implementation is shown
to be highly effective for large-scale problems.

The paper is organized as follows. In Sections~\ref{sec:newtonCQK} and
\ref{sec:newtonsimplex}, we recall the sequential Newton method for
\eqref{CQK} and important particular cases, notably for \eqref{PDelta}. In
Section~\ref{sec:parallelNewton}, we explain how it can be parallelized.
Section~\ref{sec:numerical} is devoted to numerical tests, showing that our
proposal outperforms state-of-the-art methods in the literature. Finally,
Section~\ref{sec:conclusions} presents our conclusions and outlines directions
for future work.

\paragraph{Notation} $P_C(z)$ denotes the Euclidean projection of $z$ onto a
given convex set $C$. $|I|$ denotes the cardinality of a finite set $I$. We
define $\sgn(t)$ as $1$ if $t > 0$, $-1$ if $t < 0$, and $0$ if $t = 0$.

\section{Sequential methods for CQK}

In this section, we review the two primary frameworks for solving~\eqref{CQK}.
The first is based on Newtonian methods. A secant variation was introduced
in~\citet{Dai2005}, which was later extended into a fully semismooth Newton
method by~\citet{Cominetti2014}. Our presentation is based on the latter. The
second approach relies on variable fixing techniques, see~\citet{Michelot1986,
Kiwiel2008, Kiwiel2008a, Condat2016}. We focus on Condat's variant, which
incorporates a Gauss-Seidel acceleration that is highly effective in practice \cite{Condat2016}.

\subsection{Newton methods for CQK}
\label{sec:newtonCQK}

Dualizing only the linear constraint $b^tx=r$ in \eqref{CQK}, we obtain the
dual problem
\[
\max_{\lambda \in \R} F(\lambda), \quad F(\lambda) :=
	\min_{l \leq x \leq u} \frac{1}{2} x^t D x - a^t x + \lambda(r - b^t x).
\]
One can then easily show that the vector $x(\lambda)\in \R^n$ that attains the
minimum in the definition of the dual objective is given by,
\begin{equation}\label{xopt}
  \forall i = 1, \ldots, n,\quad x(\lambda)_i 
  = P_{[l_i,u_i]}\big((b_i\lambda + a_i)/d_i\big) 
  = \max\{l_i,\min\{u_i, (b_i\lambda + a_i)/d_i\}\}.
\end{equation}
Using basic duality results, primal-dual optimality is achieved when this
vector is primal feasible. That is, the condition
\begin{equation}\label{opt}
	\varphi(\lambda) = r, \quad
	\varphi(\lambda) := b^t x(\lambda) 
	= \sum_{i=1}^n b_i \max\{l_i,\min\{u_i, (b_i\lambda + a_i)/d_i\}\}
\end{equation}
is equivalent to the optimality of $\lambda$ for the dual problem and
$x(\lambda)$ for \eqref{CQK}. Therefore, \eqref{CQK} is equivalent to solving
the nonlinear equation
\[
\varphi(\lambda) = r.
\]

The main idea in~\citet{Cominetti2014} is to apply a (semismooth) Newtonian
strategy to this equation. Since the functions $\lambda \to
P_{[l_i,u_i]}\big((b_i\lambda + a_i)/d_i\big)$ are piecewise linear, so is
$\varphi$ and its graph possibly changes the slope exactly at the
\emph{breakpoints}
\begin{equation*}
  \underline l_i = (d_il_i-a_i)/{b_i}
  \quad \text{and} \quad
  \overline u_i = (d_iu_i-a_i)/{b_i}.
\end{equation*}
Also, observe that the lateral derivatives of $\varphi$ are non-negative and
given by
\begin{equation}\label{phideriv}
  \varphi_+'(\lambda) =
  \sum_{i\,\mid \, \underline l_i\leq \lambda < \overline u_i} b_i^2/d_i
  \quad\text{and}\quad
  \varphi_-'(\lambda) =
  \sum_{i\,\mid \, \underline l_i < \lambda \leq \overline u_i} b_i^2/d_i,
\end{equation}
see Figure~\ref{fig:varphi}(a). The function $\varphi$ is non-decreasing and
may be constant between consecutive breakpoints as depicted in
Figure~\ref{fig:varphi}(b). Consequently, given $\lambda_k$, the Newtonian
step depends on whether $\varphi(\lambda_k) < r$ or $\varphi(\lambda_k) > r$.
In the first case, $\lambda$ must be increased. Thus $\varphi'_+(\lambda_k)$
is used, if not zero. Otherwise, $\lambda_k$ must be decreased, and
$\varphi'_-(\lambda_k)$ is employed. If the appropriate lateral derivative is
zero, the closest breakpoint in the required direction is computed instead.
The complete method is presented in Algorithm~\ref{alg:purenewtonCQK}. Note
that it also incorporates a secant update to avoid poor Newton steps, an idea
inspired by the pure secant method suggested in~\citet{Fletcher2006}.

\begin{figure}[ht]
  \centering
  \subfigure[]{\def\svgwidth{170px}
  %% Creator: Inkscape 1.2.2 (b0a8486541, 2022-12-01), www.inkscape.org
%% PDF/EPS/PS + LaTeX output extension by Johan Engelen, 2010
%% Accompanies image file 'phi_deriv_svg-tex.pdf' (pdf, eps, ps)
%%
%% To include the image in your LaTeX document, write
%%   \input{<filename>.pdf_tex}
%%  instead of
%%   \includegraphics{<filename>.pdf}
%% To scale the image, write
%%   \def\svgwidth{<desired width>}
%%   \input{<filename>.pdf_tex}
%%  instead of
%%   \includegraphics[width=<desired width>]{<filename>.pdf}
%%
%% Images with a different path to the parent latex file can
%% be accessed with the `import' package (which may need to be
%% installed) using
%%   \usepackage{import}
%% in the preamble, and then including the image with
%%   \import{<path to file>}{<filename>.pdf_tex}
%% Alternatively, one can specify
%%   \graphicspath{{<path to file>/}}
%% 
%% For more information, please see info/svg-inkscape on CTAN:
%%   http://tug.ctan.org/tex-archive/info/svg-inkscape
%%
\begingroup%
  \makeatletter%
  \providecommand\color[2][]{%
    \errmessage{(Inkscape) Color is used for the text in Inkscape, but the package 'color.sty' is not loaded}%
    \renewcommand\color[2][]{}%
  }%
  \providecommand\transparent[1]{%
    \errmessage{(Inkscape) Transparency is used (non-zero) for the text in Inkscape, but the package 'transparent.sty' is not loaded}%
    \renewcommand\transparent[1]{}%
  }%
  \providecommand\rotatebox[2]{#2}%
  \newcommand*\fsize{\dimexpr\f@size pt\relax}%
  \newcommand*\lineheight[1]{\fontsize{\fsize}{#1\fsize}\selectfont}%
  \ifx\svgwidth\undefined%
    \setlength{\unitlength}{185.73113162bp}%
    \ifx\svgscale\undefined%
      \relax%
    \else%
      \setlength{\unitlength}{\unitlength * \real{\svgscale}}%
    \fi%
  \else%
    \setlength{\unitlength}{\svgwidth}%
  \fi%
  \global\let\svgwidth\undefined%
  \global\let\svgscale\undefined%
  \makeatother%
  \begin{picture}(1,0.48284445)%
    \lineheight{1}%
    \setlength\tabcolsep{0pt}%
    \put(0,0){\includegraphics[width=\unitlength,page=1]{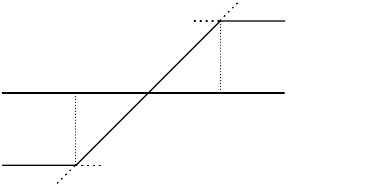}}%
    \put(0.55658796,0.20052613){\color[rgb]{0,0,0}\makebox(0,0)[lt]{\lineheight{1.25}\smash{\begin{tabular}[t]{l}$\overline u_i$\end{tabular}}}}%
    \put(0.18476851,0.26241031){\color[rgb]{0,0,0}\makebox(0,0)[lt]{\lineheight{1.25}\smash{\begin{tabular}[t]{l}$\underline l_i$\end{tabular}}}}%
    \put(0.27933748,0.04829528){\color[rgb]{0,0,0}\makebox(0,0)[lt]{\lineheight{1.25}\smash{\begin{tabular}[t]{l}$\varphi'_-(\underline l_i)$\end{tabular}}}}%
    \put(-0.00197811,0.00765719){\color[rgb]{0,0,0}\makebox(0,0)[lt]{\lineheight{1.25}\smash{\begin{tabular}[t]{l}$\varphi'_+(\underline l_i)$\end{tabular}}}}%
    \put(0.62530848,0.45217383){\color[rgb]{0,0,0}\makebox(0,0)[lt]{\lineheight{1.25}\smash{\begin{tabular}[t]{l}$\varphi'_-(\overline u_i)$\end{tabular}}}}%
    \put(0.33863063,0.41849487){\color[rgb]{0,0,0}\makebox(0,0)[lt]{\lineheight{1.25}\smash{\begin{tabular}[t]{l}$\varphi'_+(\overline u_i)$\end{tabular}}}}%
  \end{picture}%
\endgroup%
}
  \quad
  \subfigure[]{\def\svgwidth{170px}
  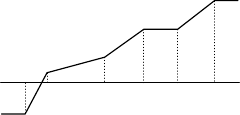}
  \caption{(a) Lateral slopes of $\max\{l_i,\min\{u_i, (b_i\lambda +
    a_i)/d_i\}\}$ at breakpoints. (b) A typical plot of $\varphi(\cdot)-r$.}
  \label{fig:varphi}
\end{figure}

\begin{algorithm}[ht]
	\caption{Semi-smooth Newton method for solving
    \eqref{CQK}}\label{alg:purenewtonCQK}
  \begin{algorithmic}[1]
    \Require the initial dual estimate $\lambda_0$
    \State Set $k\leftarrow 0$, $\underline\lambda \leftarrow -\infty$ and $\overline\lambda\leftarrow \infty$
    \If {$\varphi(\lambda_k)=r$}\label{alg:purenewtonCQKstop}
    \State Stop and return $x(\lambda_k)$ as the solution of \eqref{CQK}
    \EndIf
    \If {$\varphi(\lambda_k) < r$}
      \State Update $\underline\lambda\leftarrow \lambda_k$
      \If {$\varphi'_+(\lambda_k)>0$}
        \State Define $\tilde \lambda_k = \lambda_k - \frac{\varphi(\lambda_k)-r}{\varphi'_+(\lambda_k)}$
        \State If $\tilde \lambda_k<\overline\lambda$, define $\lambda_{k+1} = \tilde \lambda_k$ and go to line \ref{alg:purenewtonCQKend}. Otherwise, go to line \ref{alg:purenewtonCQKsecant}

      \Else
        \State Define $\lambda_{k+1}$ as the closest breakpoint to the right of $\underline\lambda$ and go to line \ref{alg:purenewtonCQKend}.
        If no such breakpoint exists, stop declaring \eqref{CQK} infeasible \label{alg:purenewtonCQKendbreakright}
      \EndIf
    \EndIf

    \If {$\varphi(\lambda_k) > r$}
      \State Update $\overline\lambda\leftarrow \lambda_k$
      \If {$\varphi'_-(\lambda_k)>0$}
        \State Define $\tilde \lambda_k = \lambda_k - \frac{\varphi(\lambda_k)-r}{\varphi'_-(\lambda_k)}$
        \State If $\tilde \lambda_k>\underline\lambda$, define $\lambda_{k+1} = \tilde \lambda_k$ and go to line \ref{alg:purenewtonCQKend}. Otherwise, go to line \ref{alg:purenewtonCQKsecant}

      \Else
        \State Define $\lambda_{k+1}$ as the closest breakpoint to the left of $\overline\lambda$ and go to line \ref{alg:purenewtonCQKend}.
        If no such breakpoint exists, stop declaring \eqref{CQK} infeasible \label{alg:purenewtonCQKendbreakleft}
      \EndIf
    \EndIf

    \State Define $\lambda_{k+1}$ as the secant step in $[\underline\lambda,\overline\lambda]$ \label{alg:purenewtonCQKsecant}

    \State Set $k\leftarrow k+1$ and go to line \ref{alg:purenewtonCQKstop} \label{alg:purenewtonCQKend}
  \end{algorithmic}
\end{algorithm}

The secant step (line \ref{alg:purenewtonCQKsecant}) is used to avoid
cycling~\cite{Cominetti2014}. It computes the zero of the affine function
whose graph interpolates $(\underline\lambda, \varphi(\underline\lambda))$ and
$(\overline\lambda, \varphi(\overline\lambda))$, where the interval
$[\underline\lambda,\overline\lambda]$ contains the optimal $\lambda^*$, hence
the name ``secant''. Necessarily, the resulting $\lambda_S$ belongs to
$(\underline\lambda,\overline\lambda)$. This approach is slightly simpler than
the secant step in Algorithm~2 of \citet{Cominetti2014}. Here, we leave the
analysis of whether $\lambda_{k+1} = \lambda_S$ lies on a non-optimal plateau
to the next iteration. Thus, the value $\varphi(\lambda_{k+1}) =
\varphi(\lambda_S)$ is computed at the next iteration and the method proceeds
normally. This strategy is sufficient for the global convergence of
Algorithm~\ref{alg:purenewtonCQK} according to~Theorem
1 of \citet{Cominetti2014}.

As suggested in \citet{Cominetti2014}, the initial dual variable $\lambda_0$
can be defined as
\begin{equation}\label{CQKlambda0}
	\lambda_0 = \frac{r-s}{q},
	\quad s = \sum_{i\in J} \frac{b_ia_i}{d_i},
	\quad q = \sum_{i\in J} \frac{b_i^2}{d_i},
\end{equation}
with $J=\{1, 2, \ldots, n\}$. This is the dual multiplier corresponding to the
solution ignoring all variable bounds, i.e., $x_i(\lambda) = (b_i\lambda +
a_i)/d_i$ for all $i$. It resembles the initialization in the variable fixing
method of \citet{Kiwiel2008a}. Alternatively, if a primal estimate $\bar x$ is
provided, we assume that it conveys information about the bounds that are
expected to be binding at the solution: variables reaching or violating their
bounds in $\bar x$ are expected to have active bounds at the solution. Hence,
$\lambda_0$ is computed with $J = \{i \mid l_i < \bar x_i < u_i\}$. If this
set is empty, we discard $\bar x$ and set $J=\{1, 2, \ldots, n\}$.

\subsection{Variable fixing}

As stated in \citet{Cominetti2014}, the Newton method can be combined with a
variable fixing technique because $\varphi$ is non-decreasing. In fact, $x_i$
can be fixed to $l_i$ if $\varphi(\lambda_k) > r$ and $x(\lambda_k)_i =
\underline l_i$. On the other hand, it can be fixed to $u_i$ if
$\varphi(\lambda_k) < r$ and $x(\lambda_k)_i = \overline u_i$. In the first
case, the optimal $\lambda_* \leq \lambda_k$, while in the second, $\lambda_*
\geq \lambda_k$. This can reduce the number of operations in
Algorithm~\ref{alg:purenewtonCQK}, as fixed variables can be neglected in the
computation of $\varphi$ and its derivatives. It was verified in
\citet{Cominetti2014} that this can be effective in speeding up the method. In
our implementation, we attempt to fix as many variables as possible before the
secant step (line \ref{alg:purenewtonCQKsecant} of
Algorithm~\ref{alg:purenewtonCQK}).

This principle forms the basis of the variable fixing algorithms developed
in \citet{Condat2016, Kiwiel2008, Kiwiel2008a}, and references therein. These
methods iteratively identify variables that are fixed at their bounds in the
solution until the optimal set of free variables is determined, allowing the
final solution to be computed using its associated multiplier estimate. In
this work, we focus on Condat's method for projection onto the simplex, as it
is widely considered the most efficient variant available.

\subsection*{Projecting onto a simplex and Condat's algorithm}

Let us focus on solving~\eqref{PDelta}. Variable fixing methods aim to
identify the subset of free variables at the optimum. Given a tentative set of
free variables $J \subseteq \{1, 2, \ldots, n \}$, a candidate primal-dual pair
can be computed using specialized versions of~\eqref{CQKlambda0}
and~\eqref{xopt}:
\begin{equation}\label{lambdaJ}
	\lambda_J = \frac{r-\sum_{i\in J} y_i}{|J|},\quad
  x^J_i = \max\{0, y_i + \lambda_J \},\ \forall i = 1, \ldots, n.
\end{equation}
Now, recalling that an optimal dual solution $\lambda_*$ recovers the optimal
primal solution $x^*$ through~\eqref{xopt}, it follows that 
\begin{displaymath}
\sum_{i = 1}^n \max\{0, y_i + \lambda_* \} = 
\sum_{i=1}^n x^*_i = r =
\sum_{i\in J} (y_i + \lambda_J) \leq
\sum_{i=1}^n \max\{0, y_i + \lambda_J \} = \sum_{i = 1}^n x^J_i.
\end{displaymath}
Consequently, $\lambda_J \geq \lambda_*$. 

This observation is the fundamental building block of variable fixing methods.
The process begins by assuming that all variables are potentially free at the
solution, i.e., $J=\{1,\ldots,n\}$. The algorithm then verifies whether the
associated $x^J$ is feasible; if so, the problem is solved. Otherwise, one or
more variables that must be fixed at $0$, the lower bound, are identified, and
the set $J$ is reduced before starting a new
iteration~\cite{Kiwiel2008, Kiwiel2008a}.

\subsection*{Condat's method to project onto simplex and $\ell_1$ ball}

\citet{Condat2016} made a key observation that resulted in the most widely
used algorithm for projection onto a simplex. He realized that as soon as a
variable is identified as fixed, the set $J$ and its respective multiplier
estimate $\lambda_J$ can be updated immediately within the same iteration,
introducing a Gauss-Seidel flavor to the method. This approach leads directly
to Algorithms~\ref{alg:lambda0simplex} and~\ref{alg:condat}.

Algorithm~\ref{alg:lambda0simplex} constructs an initial candidate subset of
free variables by adding one variable at a time, provided it cannot be proven
that the variable must be fixed at $0$ in the optimal solution.
Algorithm~\ref{alg:condat} then refines this set by identifying further
variables to fix. In both cases, the multiplier estimate is constantly updated
in a Gauss-Seidel fashion.

\begin{algorithm}[ht]
	\caption{Initialize $\lambda$ (simplex projection)}
	\label{alg:lambda0simplex}
	\begin{algorithmic}[1]
		\Require a subset of variable indices $I = \{i_1,\ldots,i_p\}\subseteq \{1,\ldots,n\}$.
		\State Set $J\leftarrow \{i_1\}$, $\widetilde J\leftarrow \emptyset$, $\lambda\leftarrow r-y_{i_1}$
		\For {$j = 2,\ldots, p$}
      \If {$y_{i_j}+\lambda > 0$}\label{alg:lambda0simplexlambdaif1}
        \State Set $\lambda \leftarrow (r - \sum_{\ell\in J}y_{i_\ell} - y_{i_j})/(|J|+1)$\label{alg:lambda0simplexlambda1}
        \If {$\lambda < r-y_{i_j}$}
        \State Update $J\leftarrow J\cup \{i_j\}$
        \Else
        \State set $\widetilde J\leftarrow \widetilde J\cup J$, $J\leftarrow \{i_j\}$ and $\lambda \leftarrow r - y_{i_j}$\label{alg:lambda0simplexlambda3}
        \EndIf
      \EndIf
		\EndFor
		\ForAll{$j\in \widetilde J$}
      \If {$y_{j}+\lambda > 0$}\label{alg:lambda0simplexlambdaif2}
      \State Set $\lambda \leftarrow (r - \sum_{\ell\in J}y_{i_\ell} - y_j)/(|J|+1)$ and $J\leftarrow J\cup \{j\}$\label{alg:lambda0simplexlambda2}
      \EndIf
		\EndFor
		\State \Return
		the list of non-fixed variables $J\subseteq I$
	\end{algorithmic}
\end{algorithm}

\begin{algorithm}[ht]
	\caption{Condat's method to project a point onto simplex}
	\label{alg:condat}
	\begin{algorithmic}[1]
		\Require the point $y$ who wants to project
		\State Compute $\lambda$ and $J\subseteq \{1,\ldots,n\}$ by Algorithm \ref{alg:lambda0simplex} with $I=\{1,\ldots,n\}$
		\While {$J$ changes}
      \ForAll {$j\in J$}
        \If {$y_{j}+\lambda \leq 0$}
        \State Set $J\leftarrow J\backslash \{j\}$ and update $\lambda \leftarrow \lambda - (y_j+\lambda)/|J|$\label{alg:condatuplambda}
        \EndIf
      \EndFor
		\EndWhile
		\State \Return the solution $x^*$ of \eqref{PDelta} defined by $x^*_i=\max\{0,y_i+\lambda\}$
	\end{algorithmic}
\end{algorithm}

\section{A specialized Newton method to project onto a simplex}
\label{sec:newtonsimplex}

We now discuss the specialization of Algorithm~\ref{alg:purenewtonCQK} for
solving~\eqref{PDelta}, with the objective of deriving a method that is
competitive with Condat's algorithm. This problem is equivalent
to~\eqref{CQK}, up to a constant in the objective, by setting $D = I$, $a = y$ and
$r > 0$, while taking $b_i = 1$, $l_i = 0$ and $u_i = +\infty$ for all $i$. The
expressions~\eqref{xopt}--\eqref{phideriv} simplify accordingly. In the
following, we demonstrate that with appropriate initialization,
Algorithm~\ref{alg:purenewtonCQK} avoids vanishing derivatives. This property
allows for a streamlined implementation, resulting in
Algorithm~\ref{alg:purenewtonsimplex}.

\begin{theorem}\label{lem:Newtonproj} Consider
  Algorithm~\ref{alg:purenewtonCQK} applied to \eqref{PDelta}. Suppose that
  $\lambda_0\geq \underline y=\min_i\{-y_i\}$ is not a zero of
  $\varphi(\lambda) - r$. Then
  \begin{enumerate}
	\item $\varphi'_+(\lambda_0)>0$ and, if $\varphi(\lambda_0) > r$,
	$\varphi'_-(\lambda_0)>0$;
	\item For all $k \geq 0,\ \lambda_{k+1} = \tilde\lambda_k\in [\underline
	\lambda, \overline \lambda]$;
	\item For all $k \geq 1,\ \lambda_k\geq \lambda_{k+1} \geq \lambda_* >
    \underline y$, where $\lambda_*$ is the zero of $\varphi(\lambda) - r$;
	\item For all $k \geq 1,\ \varphi(\lambda_k) > r$ and
    $\varphi'_-(\lambda_k)>0$.
  \end{enumerate}
\end{theorem}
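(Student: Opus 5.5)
For \eqref{PDelta} the data specialize to $\varphi(\lambda)=\sum_i\max\{0,y_i+\lambda\}$. The breakpoints are $\underline l_i=-y_i$ and $\overline u_i=+\infty$. The lateral derivatives are $\varphi'_+(\lambda)=|\{i: -y_i\le\lambda\}|$ and $\varphi'_-(\lambda)=|\{i: -y_i<\lambda\}|$. Hence $\varphi$ is zero on $(-\infty,\underline y]$ and convex, piecewise linear and strictly increasing on $[\underline y,\infty)$. Since $r>0$, the zero $\lambda_*$ of $\varphi-r$ is unique and satisfies $\lambda_*>\underline y$. I will prove the four items in order, using convexity as the central tool: for a convex piecewise linear increasing function, a Newton step from the right of the root overshoots nowhere, and a Newton step from the left overshoots to the right.

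\emph{Item 1.} If $\lambda_0\ge\underline y$, then the index attaining $\underline y$ satisfies $-y_i\le\lambda_0$, so $\varphi'_+(\lambda_0)\ge1$. If moreover $\varphi(\lambda_0)>r>0$, then some $y_i+\lambda_0>0$, i.e. $-y_i<\lambda_0$, and so $\varphi'_-(\lambda_0)\ge1$.

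\emph{Items 2--4.} I first treat $k=0$ in the two cases. If $\varphi(\lambda_0)<r$, then $\overline\lambda=\infty$, and the Newton step $\tilde\lambda_0$ is accepted. By convexity, $\varphi(\lambda)\ge\varphi(\lambda_0)+\varphi'_+(\lambda_0)(\lambda-\lambda_0)$ for $\lambda\ge\lambda_0$, so $\varphi(\tilde\lambda_0)\ge r$ and thus $\lambda_1\ge\lambda_*$. If $\varphi(\lambda_0)>r$, then $\underline\lambda=-\infty$ and the Newton step with $\varphi'_-$ is accepted. Convexity now gives $\varphi(\lambda)\ge\varphi(\lambda_0)+\varphi'_-(\lambda_0)(\lambda-\lambda_0)$ for $\lambda\le\lambda_0$. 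Applied at $\lambda_*$, this yields $\lambda_*\le\tilde\lambda_0$, hence $\lambda_1\ge\lambda_*$; and $\lambda_1<\lambda_0$ is clear.

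Next comes an induction for $k\ge1$, with hypothesis $\lambda_k\ge\lambda_*$. If $\lambda_k=\lambda_*$ the algorithm stops. Otherwise $\lambda_k>\lambda_*>\underline y$, so $\varphi(\lambda_k)>r$, and $\varphi'_-(\lambda_k)>0$ by the same argument as in item 1; this gives item 4. The Newton step $\tilde\lambda_k$ satisfies $\lambda_*\le\tilde\lambda_k<\lambda_k$ by the left-derivative convexity inequality above, which gives item 3. Finally, $\underline\lambda$ is either $-\infty$ or equal to $\lambda_0<\lambda_*$, so $\tilde\lambda_k>\underline\lambda$. Therefore the secant and breakpoint branches are never used and $\lambda_{k+1}=\tilde\lambda_k\in[\underline\lambda,\overline\lambda]$, which gives item 2. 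The only delicate point is the strictness in $\tilde\lambda_k>\underline\lambda$ when $\underline\lambda=\lambda_0$. It follows because $\lambda_0<\lambda_*\le\tilde\lambda_k$ in the case $\varphi(\lambda_0)<r$.

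The main obstacle is stating the one-sided convexity inequalities correctly with lateral derivatives, namely that a supergradient-type bound holds using $\varphi'_-$ to the left and $\varphi'_+$ to the right. This follows since $\varphi$ is a finite sum of convex functions $\max\{0,y_i+\lambda\}$, whose lateral derivatives add.
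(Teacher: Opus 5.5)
Your proof is correct and follows essentially the same route as the paper: you specialize $\varphi$ and its lateral derivatives, get item 1 from the breakpoint at $\underline y$, and then use convexity to show that the first Newton step lands at or to the right of $\lambda_*$ and that every later left-derivative step stays in $[\lambda_*,\lambda_k)$. Your version is slightly more careful than the paper's: you write the one-sided convexity inequalities explicitly, you note that the method simply stops if some $\lambda_k=\lambda_*$ (the paper asserts $\varphi(\lambda_1)>r$ strictly), and you verify the strict acceptance test $\tilde\lambda_k>\underline\lambda$.
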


\begin{proof}
The only breakpoints in this case are $\underline l_i = -y_i$. From
\eqref{opt} and \eqref{phideriv}, we have $\varphi(\lambda) = \sum_{i=1}^n
\max\{0, y_i+\lambda\}$, $\varphi_+'(\lambda) = \sum_{i\,\mid \, -y_i\leq
\lambda} 1$ and $\varphi_-'(\lambda) = \sum_{i\,\mid \, -y_i < \lambda} 1$.
Since $\lambda_0\geq \underline{y}$, immediately $\varphi'_+(\lambda_0)>0$. As
$y_i+\underline y\leq 0$ for all $i$, we have $\varphi(\underline y) = 0 < r$.
So, if $\varphi(\lambda_0)>r$, then $\lambda_0>\underline y$, in which case
$\varphi'_-(\lambda_0)>0$. Item 1 follows.

At iteration $k=0$, the Newtonian step $\tilde\lambda_k$ is accepted as the new
iterate $\lambda_1$ since $\tilde\lambda_0 = \lambda_0 -
{(\varphi(\lambda_0)-r)}/{\varphi'_+(\lambda_0)}\in (\underline \lambda,
\overline \lambda) = (\lambda_0,\infty)$ if $\varphi(\lambda_0)<r$ and
$\tilde\lambda_0 = \lambda_0 - {(\varphi(\lambda_0)-r)}/{\varphi'_-(\lambda_0)}
\in (\underline \lambda, \overline \lambda) = (-\infty,\lambda_0)$ if
$\varphi(\lambda_0)>r$. The fact that $\varphi$ is convex and non-decreasing
guarantees that $\varphi(\lambda_1) > r = \varphi(\lambda_*)$.
This implies $\lambda_1\geq \lambda_*$, and so $\varphi'_-(\lambda_1)>0$.
In the second iteration, $\underline\lambda$ remains unchanged, $\overline
\lambda\leftarrow\lambda_1$ and $\lambda_2 = \tilde\lambda_1 = \lambda_1 -
{(\varphi(\lambda_1)-r)}/{\varphi'_-(\lambda_1)}\in [\lambda_*,\lambda_1)
\subseteq (\underline\lambda,\overline\lambda)$. This repeats at every
iteration, leading to items 2, 3 and 4. \journalstyle{\hfill}
\end{proof}

\begin{algorithm}[ht]
	\caption{Semi-smooth Newton method for solving
		\eqref{PDelta}}\label{alg:purenewtonsimplex}
	\begin{algorithmic}[1]
		\Require the initial dual estimate $\lambda_0$
		\State Check, if necessary, whether 
    $\lambda_0\geq \underline y = \min_i\{-y_i\}$ or not 
    and update 
    $\lambda_0 \leftarrow \max\{\lambda_0,\underline y\}$
    \label{alg:purenewtonsimplexinit}
		\If {$\varphi(\lambda_0)=r$}
		\State Stop and return $x(\lambda_0)$ as the solution of \eqref{PDelta}
		\EndIf
		\If {$\varphi(\lambda_0)<r$}
		\State $\lambda_1 = \lambda_0 - \frac{\varphi(\lambda_0)-r}{\varphi'_+(\lambda_0)}$
		\Else
		\State $\lambda_1 = \lambda_0 - \frac{\varphi(\lambda_0)-r}{\varphi'_-(\lambda_0)}$\label{alg:purenewtonsimplexstep}
		\EndIf
		\State Set $k\leftarrow 1$
		\If {$\varphi(\lambda_k)\leq r$}\label{alg:purenewtonsimplexstop}
		\State Stop and return $x(\lambda_k)$ as the solution of \eqref{PDelta}
		\EndIf
		\State Define $\lambda_{k+1} = \lambda_k - \frac{\varphi(\lambda_k)-r}{\varphi'_-(\lambda_k)}$, set $k\leftarrow k+1$ and go to line \ref{alg:purenewtonsimplexstop}
	\end{algorithmic}
\end{algorithm}

\subsection{Warm start and variable fixing}
\label{sec:warmstart}

Let us now consider the computation of the initial multiplier $\lambda_0$ for
Algorithm~\ref{alg:purenewtonsimplex}. If no prior information is available,
one possibility is to use Algorithm~\ref{alg:lambda0simplex} with $I = \{1,
\ldots, n \}$. Since this method refines the multiplier estimate using a
Gauss-Seidel approach, it generally provides a better approximation than a
direct application of formula~\eqref{CQKlambda0}. In particular, we know that
Algorithm~\ref{alg:lambda0simplex} is guaranteed to return $\lambda_0 \geq
\lambda_* > \underline y$, which avoids the need for the verification
performed in line~\ref{alg:purenewtonsimplexinit} of the specialized Newton
method. Moreover, this initialization identifies many variables that are fixed
at $0$ in the optimal solution, thereby reducing the problem size for the
subsequent Newton iterations if variable fixing is employed.

On the other hand, there are scenarios where approximate primal solutions are
readily available. This occurs, for example, when projections are performed
within the inner iterations of an outer algorithm or when a sequence of
closely related problems is solved. In these cases, the proximity of
successive points allows the previous projection to serve as an estimate for
the current one. This strategy was employed in~\citet{Cominetti2014} to
accelerate the solution of SVM problems.

In the~\eqref{PDelta} case, a warm start can be performed by using
Algorithm~\ref{alg:lambda0simplex} to estimate the initial multiplier. Let
$\bar x \geq 0$ be a good approximation of the optimal solution $x^* =
x(\lambda_*)$ of~\eqref{PDelta}. We can then define the set of candidates for
free variables as the support of the approximation, $\bar I_+ = \{i \mid \bar
x_i > 0 \}$, and use it as the initial set $I$ in
Algorithm~\ref{alg:lambda0simplex}. This approach assumes that the free
variables in $\bar x$ are likely to remain free in $x^*$, thereby yielding a
more accurate $\lambda_0$. This initialization maintains the property that
$\lambda_0 \geq \lambda_* \geq \underline y$. Furthermore, we can fix $x^*_i =
0$ whenever $y_i +\lambda \leq 0$, where $\lambda$ is the intermediate
approximation computed during the execution of
Algorithm~\ref{alg:lambda0simplex} that leads to the final value $\lambda_0$.

Finally, it is worth noting that we do not need to precompute $\bar I_+$
explicitly to apply Algorithm~\ref{alg:lambda0simplex} with $I = \bar I_+$.
This would require an additional pass over the data. Instead, we identify
these indices by testing whether the corresponding component of $\bar x$ is
positive before updating $\lambda$ (lines~\ref{alg:lambda0simplexlambda1},
\ref{alg:lambda0simplexlambda3}, and \ref{alg:lambda0simplexlambda2} in
Algorithm~\ref{alg:lambda0simplex}).
%directly from the signs of the components in $\bar x$.
In our
implementation, we maintain an auxiliary set of indices $J_+$ to store the
elements of $I_+$ used for computing the multiplier. In the rare event that
$J_+ = \emptyset$, the estimate~\eqref{lambdaJ} is undefined. In this case, we
simply set $\lambda = \max \{ r / n, -y_1 \} \geq \underline y$. We present
numerical results illustrating the computational benefits of this warm-start
strategy for Algorithm~\ref{alg:purenewtonsimplex} in
Section~\ref{sec:basispursuit}.

\subsection{Projection onto a $\ell_1$ ball}
\label{sec:l1ball}

%Next we recall the fundamental result that connects the projection onto the
%$\ell_1$ ball $\mathcal B$ and the projection onto simplex, whose proof can be
%found in \cite{Condat2016,Duchi2008}.
%
%\begin{theorem}\label{teo:l1ball}
%	Given $y\in \R^n$, $P_{\mathcal B}(y)=y$ if $y\in \mathcal B$ and
%	$P_{\mathcal B}(y) = (\sgn(y_1) x_1, \ldots, \sgn(y_n) x_n)$ otherwise,
%	where $x=P_{\Delta}(|y|)=P_{\Delta}(|y_1|,\ldots, |y_n|)$.
%\end{theorem}

Clearly, if $y\in \mathcal B$, the projection of $y$ onto $\mathcal B$ is $y$
itself. Otherwise, the projection can be recovered from projecting the
entry-wise absolute value $|y| = (|y_1|,\ldots,|y_n|)$ onto $\Delta$ and
multiplying its result by the original signs of $y$, see Proposition~2.1
of \citet{Condat2016}. 

In this case, we can also derive a more aggressive variable fixing result. The
next lemma, whose proof follows from Proposition~2.1 of \citet{Condat2016} and
\eqref{xopt}--\eqref{phideriv}, establishes that the $i$-th coordinate of the
optimal projection $x_i^*$ is zero whenever $y_i = 0$ and $y \not\in \mathcal
B$. This allows us to specialize the variable fixing criterion used in the
computation of $\lambda_0$ beyond that of Algorithm \ref{alg:lambda0simplex}.
Specifically, the conditions $y_{i_j} + \lambda > 0$ and $y_{j} + \lambda > 0$
in lines~\ref{alg:lambda0simplexlambdaif1} and \ref{alg:lambda0simplexlambdaif2}
to $\min\{|y_{i_j}|,|y_{i_j}| + \lambda\} > 0$ and $\min\{|y_{j}|,|y_{j}| +
\lambda\} > 0$, respectively. Clearly, $y_l$ must be changed to $|y_l|$
elsewhere in the logic.

\begin{lemma}\label{lem:l1ballfixvar} Suppose that $\sum_{i=1}^n |y_i|\geq r$
and let $x^* = P_{\mathcal B}(y)$. Then, for all $i = 1, \ldots, n$, $x_i^*
= \sgn(y_i) x(\lambda_*)_i = \sgn(y_i) \max\{0,|y_i|+\lambda_*\}$, where
$\lambda_*\leq 0$ solves the dual of \eqref{PDelta} with $|y|$ in place of $y$.
\end{lemma}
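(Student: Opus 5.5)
The plan is to reduce the statement to the simplex projection of $|y|$, using Proposition~2.1 of \citet{Condat2016}, and then read off the coordinates from \eqref{xopt}. The proof splits into two cases. When $\sum_i |y_i| > r$ we have $y\notin\mathcal B$, and Proposition~2.1 gives $x^*_i = \sgn(y_i)\,P_\Delta(|y|)_i$. When $\sum_i|y_i| = r$ we have $y\in\mathcal B$, so $x^*=y$. In both cases it remains to identify $P_\Delta(|y|)$ through its dual multiplier and to check the sign of that multiplier.

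First apply the discussion in Section~\ref{sec:newtonCQK} to \eqref{PDelta} with $|y|$ in place of $y$. Here $\varphi(\lambda)=\sum_i\max\{0,|y_i|+\lambda\}$, which is continuous and non-decreasing. It equals $0<r$ for $\lambda\le \min_i\{-|y_i|\}$ and tends to $+\infty$, so a zero $\lambda_*$ of $\varphi-r$ exists. By \eqref{opt}, $P_\Delta(|y|)=x(\lambda_*)$ with $x(\lambda_*)_i=\max\{0,|y_i|+\lambda_*\}$. Next, $\varphi(0)=\sum_i|y_i|\ge r=\varphi(\lambda_*)$. If $\lambda_*>0$, every term with $|y_i|+\lambda_*>0$ satisfies $|y_i|+\lambda_*>|y_i|$, and at least one such term exists because $r>0$. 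Then $\varphi(\lambda_*)>\varphi(0)\ge r$, a contradiction, so $\lambda_*\le 0$. In the boundary case $\sum_i|y_i|=r$, the value $\lambda_*=0$ is a zero, giving $x(0)_i=|y_i|$ and hence $\sgn(y_i)|y_i|=y_i=x^*_i$. The formula therefore holds in both cases. If $\varphi$ has a plateau at level $r$, any zero yields the same $x(\lambda_*)$ by uniqueness of the projection, so I would state the result for the zero chosen as nonpositive, which always exists by the argument above.

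Finally, to justify the fixing remark, note that $\lambda_*\le0$ gives $\max\{0,|y_i|+\lambda_*\}=0$ whenever $y_i=0$. Also $\sgn(0)=0$ makes the formula consistent regardless. The main obstacle is simply careful handling of the boundary case and the possible non-uniqueness of $\lambda_*$; the rest is immediate from Proposition~2.1 of \citet{Condat2016} and \eqref{xopt}--\eqref{opt}.
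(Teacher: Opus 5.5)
Your proposal is correct and follows essentially the same route as the paper: split into the cases $\sum_i|y_i|=r$ (where $x^*=y$ and $\lambda_*=0$ works) and $\sum_i|y_i|>r$ (where Proposition~2.1 of \citet{Condat2016} reduces the claim to $P_\Delta(|y|)=x(\lambda_*)$). In both cases you rule out $\lambda_*>0$ because it would force $\varphi(\lambda_*)>\sum_i|y_i|\ge r$. Your concern about a plateau is harmless but unnecessary: $\varphi$ is strictly increasing wherever it is positive, so for $r>0$ the zero $\lambda_*$ is unique.
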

%\begin{proof}
%From Theorem \ref{teo:l1ball}, if $\sum_{i=1}^n |y_i|=r$ then $x^*=y$, and so
%\eqref{soll1ball} is valid with $\lambda_*=0$. If $\sum_{i=1}^n |y_i|>r$ then
%$x^*=P_\Delta(|y_1|,\ldots,|y_n|)$ and \eqref{soll1ball} follows from
%\eqref{phisimplex}. Finally, it can not be $\lambda_* > 0$ since in this case
%we would have $\sum_i |x_i^*| = \sum_i \max\{0,|y_i|+\lambda_*\} > \sum_i
%|y_i|\geq r$.
%\end{proof}

\section{Parallelizing CQK methods}
\label{sec:parallelNewton}

As stated in the introduction, Dai and Chen recently proposed a
parallelization strategy for simplex projection methods. They achieve this by
exploiting the fact that for a subproblem containing only a subset of the
variables, if a variable is identified as fixed at $0$, it necessarily remains
fixed at $0$ in the full problem instance~\cite[Proposition 4]{Dai2024}.
Parallelization is achieved by solving multiple subproblems, which partition
the original instance, to a coarse precision to identify the majority of the
fixed variables. Subsequently, a reduced version of the full instance is
solved. Since the vast majority of variables are expected to be fixed in a
typical solution, this final problem is usually very small and can be solved
in a fraction of the time required by the initial parallel phase. This
strategy can be applied to any method that performs variable fixing. In
particular, it results in a very effective variant of Condat's method, which
was shown to be the fastest in their implementation. This efficiency is
possible because the parallel phase, which identifies the fixed variables,
occurs before the Gauss-Seidel-style loop in Algorithm~\ref{alg:condat}, which
is inherently difficult to parallelize.

On the other hand, the pure Newton methods, Algorithm~\ref{alg:purenewtonCQK}
and~\ref{alg:purenewtonsimplex}, can be trivially parallelized in a
Jacobi-like fashion, as their primary steps contain no sequential
dependencies. The bulk of the computational effort is associated with
evaluating $\varphi$ and its derivatives. Such computation rely only on the
current multiplier $\lambda_k$, which is a single scalar that can be
efficiently shared among parallel workers. In fact, these methods fit
naturally into a MapReduce framework~\cite{Dean2008}. The \textit{map} phase
involves computing the individual summands that define the value of $\varphi$
and its derivatives across subsets of the data. The \textit{reduce} phase then
performs the summation to aggregate these values. Subsequently, a simple
multiplier update is performed to initiate the next iteration. This makes such
methods ideal for implementation on GPUs, where the memory indirection and
control-flow complexities often associated with the variable-fixing logic can
be difficult to implement efficiently.

CPU implementations of the method can still benefit from variable fixing. In
this context, each worker in the MapReduce framework is responsible for a
subset of the data (a \emph{chunk}). At each iteration, the worker computes
the partial summands of $\varphi$ and its derivatives for the variables in its
chunk and performs local variable fixing. This remains possible because all
workers share a consistent multiplier estimate, which coordinates their local
computations. In our implementation, we also incorporate a mechanism to merge
chunks that become too small; this avoids the overhead of spawning or managing
threads for tasks that involve too little computational work.

The initial multiplier $\lambda_0$ can also be computed in parallel. On GPUs
or in the general~\eqref{CQK} case, it is preferable to use the formula
provided in~\eqref{CQKlambda0}, which fits naturally into a pure MapReduce
framework without requiring memory indirection. Conversely, for the simplex
case on CPUs, we can employ the more sophisticated
Algorithm~\ref{alg:lambda0simplex}. In this case, each worker executes the
algorithm in parallel on a distinct subset $I$ of the variables. Upon
completion, each thread will have identified its own subset $J_I$ of non-fixed
variables. The global $\lambda_0$ is then obtained via~\eqref{lambdaJ} using
the union $J = \cup_I J_I$. This aggregation is performed using the partial
sums already computed within each chunk, thereby avoiding redundant operations
when calculating the final $\lambda_0$.

Finally, the primal solution $x(\lambda_*)$ can also be computed in parallel,
as it is fundamentally a \textit{map} operation in the GPU implementation. In
the CPU implementation, which utilizes variable fixing, this calculation is
performed as soon as a variable is fixed. This is necessary because fixing a
variable to its bound can modify the right-hand side of the equality
constraint, $r$. However, in the specific case of the simplex, $x(\lambda_*)$
is calculated only at the end of the process; since fixed variables in the
simplex are always zero, they do not affect the right-hand side of the
constraint.

\section{Numerical tests}
\label{sec:numerical}

We have implemented the Newton algorithms in a Julia package named {\tt
NewtonCQK.jl} that can be installed using the language's standard package
system. The code is available at
\url{https://github.com/pjssilva/NewtonCQK.jl}. It has both CPU (with variable
fixing) and a GPU (pure MapReduce) variants to solve the general~\eqref{CQK}
problem and its special cases~\eqref{PDelta} and the associated projection on
the $\ell_1$-ball. 

The implementations of Algorithm~\ref{alg:purenewtonCQK} stop declaring
success if one of the following criteria is satisfied:
\begin{enumerate}
\item \label{stop:relsolve} $|\varphi(\lambda_k) - r| < \epsilon^{3/4} \Big(
\sum_{i=1}^n |b_i x(\lambda_k)| + |r|\Big)$ \hfill (relative to line
\ref{alg:purenewtonCQKstop} of Algorithm \ref{alg:purenewtonCQK}); 

\item \label{stop:nullstep} $| {(\varphi(\lambda_k)-r)}/{\varphi'(\lambda_k)}
| < \epsilon^{3/4}$ \ or \ $\lambda_{k-1} = \lambda_k$ \hfill (numerically
zero step);

\item \label{stop:bracket} $\overline\lambda - \underline\lambda <
\epsilon^{3/4} \max\{|\overline\lambda|, |\underline\lambda|\}$ \hfill (no
improvement is expected),
\end{enumerate}
where $\epsilon$ is the machine epsilon ($\epsilon\approx 2.22\times 10^{-16}$
for 64-bit floating point and $\epsilon\approx 1.19\times 10^{-7}$ for
32-bit). Criteria~\ref{stop:relsolve} and~\ref{stop:bracket} appear in the
implementation from \citet{Cominetti2014}, except for the use of $10^{-12}$
in item~\ref{stop:relsolve} and $2.22\times 10^{-16}$ in item~\ref{stop:bracket}
instead of $\epsilon^{3/4}$. For Algorithm \ref{alg:purenewtonsimplex} and
related, only criteria~\ref{stop:nullstep} and~\ref{stop:bracket} are used.

We test our implementation against the following software:
\begin{itemize}
\item the sequential Newton method described in \citet{Cominetti2014} for
solving \eqref{CQK}. Its C code is freely available at
\url{www.ime.unicamp.br/~pjssilva/code}. We modified the code to use the
stopping criteria described above. We refer to this implementation as the
``CMS'' for short;

\item the sequential Condat's method (Algorithm \ref{alg:condat}) for
projecting onto simplex, whose C code is freely available at
\url{lcondat.github.io/software.html};

\item the parallel Condat's method developed by \citet{Dai2024} for projecting
onto simplex and $\ell_1$ ball, whose Julia code is freely available at
\url{github.com/foreverdyz/Parallel-Simplex-Projection}.
\end{itemize}

All methods use a similar amount of memory, the additional memory required for
the parallel variations is at most proportional to the number of cores, which
is negligible when compared with problem sizes. Hence we focus the comparison
on running time. The CPU implementations were tested on a computer equipped
with two 24 core AMD EPYC 9254, totalizing 48 cores, and 1.5 TB RAM under
Debian 6.1. Experiments with GPU were performed on another computer using an
NVIDIA Tesla A100 40GB. Table~\ref{tab:hardware} presents the detailed
hardware specification. 

Let us define the nomenclature used in the following sections. When comparing
different methods or distinct implementations of the same algorithm, we
designate a base method and report the \textit{relative performance} of
alternative methods. We define \textit{relative performance} as the ratio of
the execution time of the sequential base method to that of an alternative
(possibly parallel) implementation. Consequently, a value greater than 1
indicates that the alternative is faster, while a value less than 1 indicates
that the base method is more efficient. We utilize the term (parallel)
\textit{speedup} to assess how implementations scale with the number of
processing units. By using a single sequential method as the common numerator
for these ratios, we can compare the absolute performance of different
algorithms and their respective parallel behaviors within the same figure. An
increasing ratio indicates effective parallelization, while a decrease
suggests that computational overhead -- likely due to thread management or
communication bottlenecks -- outweighs the gains from concurrent execution.

\begin{table}
\begin{tabular*}{\columnwidth}{@{\extracolsep\fill}rll@{\extracolsep\fill}}
	\hline
	& CPU & GPU\\
	\hline
	Model & AMD EPYC 9254 24-Core & NVIDIA Tesla A100 \\
	Cores & 48 divided into 2 equal processors & 6,912 Cuda cores \\
	Cache & L1: 96 KB, L2: 512 KB/core; L3: 32 MB & -- \\
	TFLOPS (FP64) & $\approx$ 1.3 -- 1.6 TFLOPS & $\approx 9.7$ TFLOPS\\
	Memory & 1.5 TB DDR5 & 40 GB HBM2 \\
	Mem. bandwidth & $\approx 460.8$ GB/s & $\approx 1.56$ TB/s \\
	\hline
\end{tabular*}
\caption{Hardware specification.}\label{tab:hardware}
\end{table}

\subsection{Tests with general quadratic knapsack problems}

\subsubsection{Randomly generate instances}

Following \citet{Cominetti2014} and references therein, we considered three
classes of problems:
\begin{enumerate}
\item \textit{Uncorrelated}: \ $d_i, a_i, b_i \sim U[10,25]$ for each $i$;
\item \textit{Weakly correlated}: \ $b_i \sim U[10,25]$ and $d_i,a_i\sim
U[b_i-5,b_i+5]$ for each $i$;
\item \textit{Correlated}: \ $b_i \sim U[10,25]$ and $d_i = a_i = b_i+5$ for each $i$,
\end{enumerate}
where $U[p,q]$ stands for the uniform distribution in the interval $[p,q]$.
For all classes, $l_i, u_i$'s were chosen as the minimum and maximum between
two numbers sorted i.i.d. from $U[10,25]$, and $r \sim U[b^tl, b^tu]$. The
random numbers were generated using the Julia package
\texttt{Distributions.jl}~\cite{Besancon2021}. In particular we used its
default option, the algorithm~\texttt{Xoshiro256++}~\cite{Marsaglia2003}. 

We generated 20 instances for dimensions ranging from $1,000$ to
$1,000,000,000$. The runtime for each instance was measured using
\texttt{BenchmarkTools.jl}~\cite{Chen2016} to minimize measurement
errors. In particular \texttt{BenchmarkTools.jl} was set to solve each
instance until 10,000 runs or 2 seconds had elapsed. The final value for an
instance is the minimal measured runtime as it is a robust estimator of the
actual value~\cite{Chen2016}. We then report for each dimension the median
runtime of its instances.

Table~\ref{tab:cqk_cms} presents the median CPU times for the Julia
implementation of Algorithm~\ref{alg:purenewtonCQK} with variable fixing
present in~\texttt{NewtonCQK.jl} alongside the original CMS implementation
written in C. Across various problem classes and sizes, the results
demonstrate that the Julia version is nearly as fast as its C counterpart,
requiring at most 10\% more time for small instances. Notably, this
performance gap narrows as the problem size increases. This slight disparity
should not be attributed to the programming language alone; rather, it
reflects different architectural choices. Our Julia version utilizes the
MapReduce framework provided by \texttt{OhMyThreads.jl}~\cite{Bauer2026},
which allows the same codebase to execute in a multi-threaded environment and
exploit the multiple cores of modern CPUs. This parallelization capability is
not available in the original serial CMS implementation. The Julia code can
also work with any floating-point type and performs basic sanity tests on data
(check whether $d > 0$, $b > 0$ and $l \leq u$) that is not present in the CMS
implementation. 

\begin{table}[!h]
\footnotesize
\begin{tabular*}{\columnwidth}{@{\extracolsep\fill}lcccccc@{\extracolsep\fill}}
	\hline
	& \multicolumn{2}{c}{Uncorrelated} & \multicolumn{2}{c}{Weakly correlated} & \multicolumn{2}{c}{Correlated}\\
	& CMS & \texttt{NewtonCQK.jl} & CMS & \texttt{NewtonCQK}.jl & CMS & \texttt{NewtonCQK.jl} \\
	$n$  & time (ms)& relative & time (ms) & relative & time(ms) & relative \\
	\hline
	$10^{3}$ & 6.22e$-$03 &   1.0983 & 6.47e$-$03 &   1.0604 & 7.33e$-$03 &   1.0745 \\
	$10^{4}$ & 6.44e$-$02 &   1.0800 & 6.74e$-$02 &   1.0888 & 7.53e$-$02 &   1.0610 \\
	$10^{5}$ & 3.51e$+$00 &   1.0589 & 3.36e$+$00 &   1.0799 & 3.05e$+$00 &   1.0883 \\
	$10^{6}$ & 3.89e$+$01 &   1.0541 & 3.50e$+$01 &   1.0611 & 3.15e$+$01 &   1.0723 \\
	$10^{7}$ & 3.99e$+$02 &   1.0304 & 3.67e$+$02 &   1.0216 & 3.26e$+$02 &   1.0504 \\
	$10^{8}$ & 3.67e$+$03 &   1.0366 & 3.62e$+$03 &   1.0403 & 3.32e$+$03 &   1.0474 \\
	$10^{9}$ & 4.22e$+$04 &   1.0079 & 3.63e$+$04 &   1.0398 & 3.32e$+$04 &   1.0472 \\
	\hline
\end{tabular*}
\caption{Comparison between the sequential Algorithm~\ref{alg:purenewtonCQK}
	implemented in Julia and the CMS implementation for a single core. Times
	are in milliseconds and were taken as the median over the 20 generated
	instances. The columns labeled ``relative'' present the ratio between the
	time used by our implementation and the CMS value.}
\label{tab:cqk_cms}
\end{table}

Next, we focus on the parallel scalability of the \texttt{NewtonCQK.jl}
implementation of Algorithm~\ref{alg:purenewtonCQK} with variable fixing on
CPUs. Figure~\ref{fig:cqkspeedup} illustrates the speedup relative to the
sequential version as the problem size increases. As expected, employing
multiple threads is not effective for small problems ($n \leq 10,000$). In
these cases, thread management overhead constitutes a significant portion of
the total execution time. Generally, as the number of variables increases, the
benefits of using additional threads become more pronounced, with the speedup
approaching linearity for the largest instances. In our benchmarks, parallel
execution becomes beneficial for $n \geq 100,000$. Ultimately, users should
select the threading configuration that best aligns with their specific
problem scale and hardware architecture.

\begin{figure}[!ht]
	\centering
	\def\scale{0.485}
	\foreach \n in {10000,100000,1000000,10000000,100000000,1000000000}{
	\subfigure{
		\includegraphics[width=\scale\textwidth]{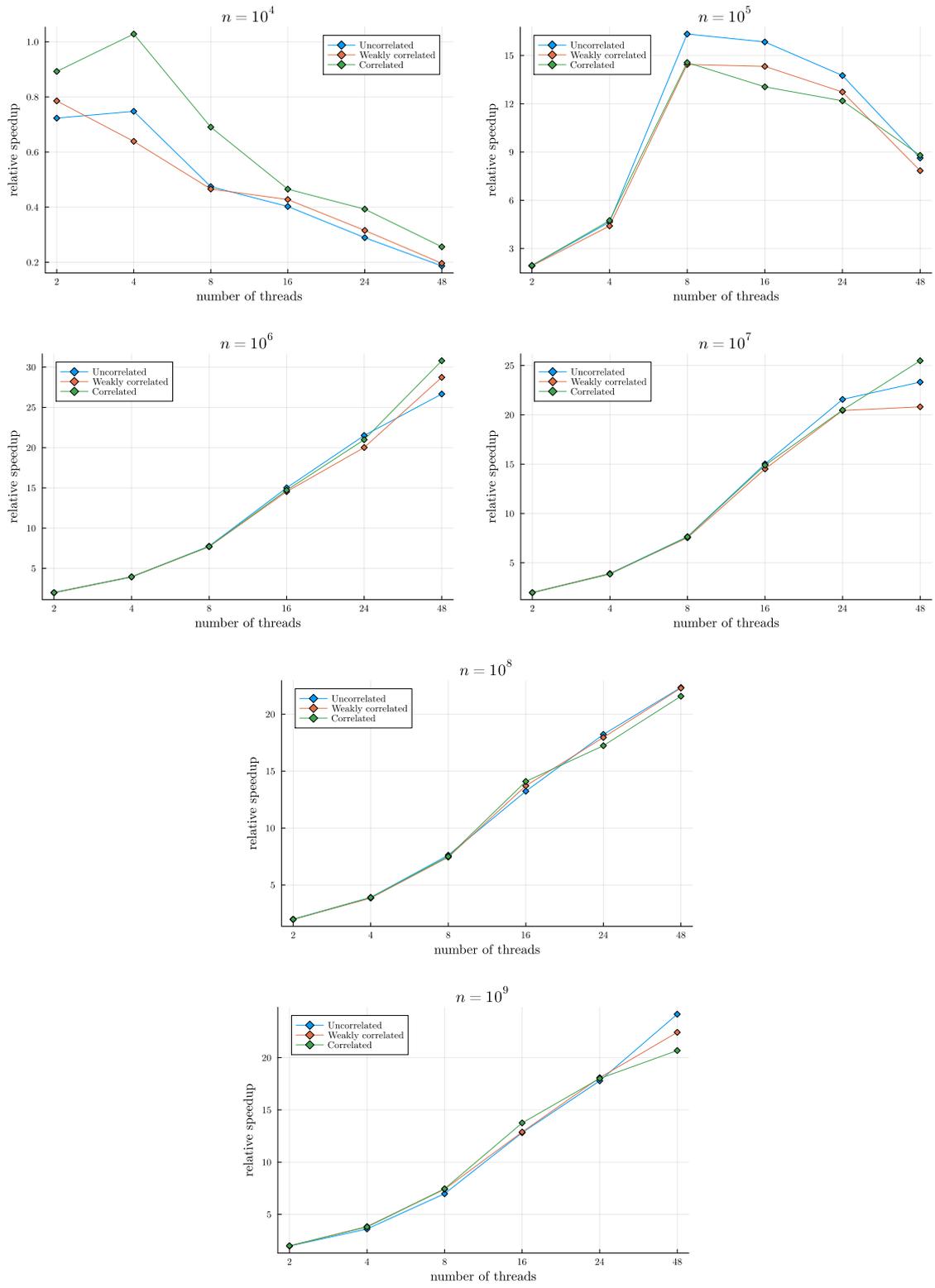}}
	}
	\caption{Speedups of parallel Algorithm \ref{alg:purenewtonCQK} with
	variable fixing relative to the sequential version on random instances.}
	\label{fig:cqkspeedup}
\end{figure}

Finally, we compare the CPU and GPU variants. It is important to note that the
GPU version does not utilize variable fixing. Table~\ref{tab:cqkuncorr}
presents these results, where the GPU columns indicate the relative
performance of the GPU version compared to the CPU implementation using
various thread counts. The values in parentheses represent the mean number of
iterations performed across the 20 generated instances. Instances with $n =
10^9$ were omitted due to GPU memory constraints. For large-scale problems ($n
\geq 10^6$), the GPU version is significantly faster, outperforming even the CPU
implementation running on all 48 threads. Notably, for the largest problems
($n = 10^8$), the CPU version using a single thread and 48 threads is at least
151 and 6.8 times slower than the GPU implementation, respectively.

%=================
% COMENTÁRIO VÁLIDO PARA FP32, pode ser interessante reportar, conectando com a seção ``About the accuracy on large problems'':
%=================
%It worth noting that the sequential CPU algorithm does not converge in problems
%with $n=10^8$. The reason for this is an intrinsic numerical difficult to achieve
%high accuracy in huge problems, especially when using low floating-point
%precision. We discuss this issue with more details in section
%\ref{sec:numericaldifficulties}.
%For the same reason, CPU algorithm takes more iterations to converge when
%using 2 or 4 threads on the problems with $n=10^8$. In some sense, parallel
%algorithms fit better to large problems, see section
%\ref{sec:numericaldifficulties} for a discussion.

\begin{table}[!ht]
\footnotesize
\begin{tabular*}{\columnwidth}{@{\extracolsep\fill}llcccccc@{\extracolsep\fill}}
	\hline
	&  & \multicolumn{2}{c}{Uncorrelated} & \multicolumn{2}{c}{Weakly correlated} & \multicolumn{2}{c}{Correlated}\\
	& & CPU & GPU & CPU & GPU & CPU & GPU \\
	$n$ & th  & time (ms) & relative & time (ms) & relative & time (ms) & relative \\
	\hline
	$10^{4}$ & 1 & 7e$-$02 (6.6) &   0.1 (5.7) & 7e$-$02 (5.6) &   0.1 (5.5) & 8e$-$02 (5.3) &   0.2 (5.7) \\
	$10^{4}$ & 2 & 1e$-$01 (6.6) &   0.2 (5.7) & 9e$-$02 (5.6) &   0.2 (5.5) & 9e$-$02 (5.3) &   0.2 (5.7) \\
	$10^{4}$ & 4 & 9e$-$02 (6.6) &   0.2 (5.7) & 1e$-$01 (5.6) &   0.2 (5.5) & 8e$-$02 (5.3) &   0.2 (5.7) \\
	$10^{4}$ & 8 & 1e$-$01 (6.6) &   0.3 (5.7) & 2e$-$01 (5.6) &   0.3 (5.5) & 1e$-$01 (5.3) &   0.2 (5.7) \\
	$10^{4}$ & 16 & 2e$-$01 (6.6) &   0.3 (5.7) & 2e$-$01 (5.6) &   0.3 (5.5) & 2e$-$01 (5.3) &   0.3 (5.7) \\
	$10^{4}$ & 24 & 2e$-$01 (6.6) &   0.4 (5.7) & 2e$-$01 (5.6) &   0.5 (5.5) & 2e$-$01 (5.3) &   0.4 (5.7) \\
	$10^{4}$ & 48 & 4e$-$01 (6.6) &   0.7 (5.7) & 4e$-$01 (5.6) &   0.8 (5.5) & 3e$-$01 (5.3) &   0.6 (5.7) \\
	\hline
	$10^{5}$ & 1 & 4e$+$00 (5.5) &   5.6 (6.3) & 4e$+$00 (6.5) &   5.5 (6.0) & 3e$+$00 (5.6) &   4.9 (6.0) \\
	$10^{5}$ & 2 & 2e$+$00 (5.5) &   2.9 (6.3) & 2e$+$00 (6.5) &   2.9 (6.0) & 2e$+$00 (5.6) &   2.5 (6.0) \\
	$10^{5}$ & 4 & 8e$-$01 (5.5) &   1.2 (6.3) & 8e$-$01 (6.5) &   1.2 (6.0) & 7e$-$01 (5.6) &   1.0 (6.0) \\
	$10^{5}$ & 8 & 2e$-$01 (5.5) &   0.3 (6.3) & 3e$-$01 (6.5) &   0.4 (6.0) & 2e$-$01 (5.6) &   0.3 (6.0) \\
	$10^{5}$ & 16 & 2e$-$01 (5.5) &   0.4 (6.3) & 3e$-$01 (6.5) &   0.4 (6.0) & 3e$-$01 (5.6) &   0.4 (6.0) \\
	$10^{5}$ & 24 & 3e$-$01 (5.5) &   0.4 (6.3) & 3e$-$01 (6.5) &   0.4 (6.0) & 3e$-$01 (5.6) &   0.4 (6.0) \\
	$10^{5}$ & 48 & 4e$-$01 (5.5) &   0.6 (6.3) & 5e$-$01 (6.5) &   0.7 (6.0) & 4e$-$01 (5.6) &   0.6 (6.0) \\
	\hline
	$10^{6}$ & 1 & 4e$+$01 (6.1) &  48.2 (6.1) & 4e$+$01 (6.4) &  50.5 (5.5) & 3e$+$01 (5.5) &  45.7 (5.7) \\
	$10^{6}$ & 2 & 2e$+$01 (6.1) &  24.2 (6.1) & 2e$+$01 (6.4) &  25.4 (5.5) & 2e$+$01 (5.5) &  23.0 (5.7) \\
	$10^{6}$ & 4 & 1e$+$01 (6.1) &  12.2 (6.1) & 9e$+$00 (6.4) &  12.8 (5.5) & 9e$+$00 (5.5) &  11.6 (5.7) \\
	$10^{6}$ & 8 & 5e$+$00 (6.1) &   6.2 (6.1) & 5e$+$00 (6.4) &   6.6 (5.5) & 4e$+$00 (5.5) &   5.9 (5.7) \\
	$10^{6}$ & 16 & 3e$+$00 (6.1) &   3.2 (6.1) & 3e$+$00 (6.4) &   3.5 (5.5) & 2e$+$00 (5.5) &   3.1 (5.7) \\
	$10^{6}$ & 24 & 2e$+$00 (6.1) &   2.2 (6.1) & 2e$+$00 (6.4) &   2.5 (5.5) & 2e$+$00 (5.5) &   2.2 (5.7) \\
	$10^{6}$ & 48 & 2e$+$00 (6.1) &   1.8 (6.1) & 1e$+$00 (6.4) &   1.8 (5.5) & 1e$+$00 (5.5) &   1.5 (5.7) \\
	\hline
	$10^{7}$ & 1 & 4e$+$02 (5.8) & 145.6 (6.0) & 4e$+$02 (6.2) & 133.4 (5.8) & 3e$+$02 (5.7) & 122.3 (5.6) \\
	$10^{7}$ & 2 & 2e$+$02 (5.8) &  73.6 (6.0) & 2e$+$02 (6.2) &  67.5 (5.8) & 2e$+$02 (5.7) &  61.8 (5.6) \\
	$10^{7}$ & 4 & 1e$+$02 (5.8) &  37.2 (6.0) & 1e$+$02 (6.2) &  34.2 (5.8) & 9e$+$01 (5.7) &  31.7 (5.6) \\
	$10^{7}$ & 8 & 5e$+$01 (5.8) &  19.0 (6.0) & 5e$+$01 (6.2) &  17.7 (5.8) & 4e$+$01 (5.7) &  16.0 (5.6) \\
	$10^{7}$ & 16 & 3e$+$01 (5.8) &   9.7 (6.0) & 3e$+$01 (6.2) &   9.2 (5.8) & 2e$+$01 (5.7) &   8.2 (5.6) \\
	$10^{7}$ & 24 & 2e$+$01 (5.8) &   6.7 (6.0) & 2e$+$01 (6.2) &   6.5 (5.8) & 2e$+$01 (5.7) &   6.0 (5.6) \\
	$10^{7}$ & 48 & 2e$+$01 (5.8) &   6.2 (6.0) & 2e$+$01 (6.2) &   6.4 (5.8) & 1e$+$01 (5.7) &   4.8 (5.6) \\
	\hline
	$10^{8}$ & 1 & 4e$+$03 (6.0) & 174.3 (5.9) & 4e$+$03 (6.0) & 151.1 (6.6) & 3e$+$03 (6.0) & 185.6 (5.6) \\
	$10^{8}$ & 2 & 2e$+$03 (6.0) &  88.3 (5.9) & 2e$+$03 (6.0) &  76.4 (6.6) & 2e$+$03 (6.0) &  94.0 (5.6) \\
	$10^{8}$ & 4 & 1e$+$03 (6.0) &  44.7 (5.9) & 1e$+$03 (6.0) &  39.3 (6.6) & 9e$+$02 (6.0) &  47.7 (5.6) \\
	$10^{8}$ & 8 & 5e$+$02 (6.0) &  22.9 (5.9) & 5e$+$02 (6.0) &  20.3 (6.6) & 5e$+$02 (6.0) &  24.8 (5.6) \\
	$10^{8}$ & 16 & 3e$+$02 (6.0) &  13.2 (5.9) & 3e$+$02 (6.0) &  11.0 (6.6) & 2e$+$02 (6.0) &  13.2 (5.6) \\
	$10^{8}$ & 24 & 2e$+$02 (6.0) &   9.6 (5.9) & 2e$+$02 (6.0) &   8.4 (6.6) & 2e$+$02 (6.0) &  10.8 (5.6) \\
	$10^{8}$ & 48 & 2e$+$02 (6.0) &   7.8 (5.9) & 2e$+$02 (6.0) &   6.8 (6.6) & 2e$+$02 (6.0) &   8.6 (5.6) \\
	\hline
\end{tabular*}
\caption{Comparison between CPU and GPU versions of Algorithm
\ref{alg:purenewtonCQK} implemented in \texttt{NewtonCQK.jl} using 64 bit
floating-point. Times are in milliseconds, and were taken as the median over
the 20 generated instances. The values between parentheses are the mean of
iterations.}
\label{tab:cqkuncorr}
\end{table}

\subsubsection{Support vector machines}
\label{sec:svm}

Following \citet{Cominetti2014}, we also evaluated our methods on
non-synthetic datasets by solving soft-margin SVM binary classification
problems. In this context, CQK subproblems arise naturally when applying
projected gradient methods to solve the dual formulation of the SVM. A brief
description of this setting follows.

Given a set of labeled data
\[
    T = \{(z^i, y_i) \mid z^i \in \R^m, \ y_i \in \{-1,1\}, \ i=1,\ldots,n\}
\]
we seek a classification function $F: \R^m \to \{-1,1\}$ of the form
\[
    F(z) = \text{sgn}\left( \sum_{i=1}^n x_i^* y_i K(z,z^i) + b_* \right),
\]
where $x^* \in \R^n$ and $b_* \in \R$ are the optimal
parameters to be determined, and $K: \R^m \times \R^m \to
\R$ is a fixed kernel function. In our experiments, we employ the
standard Gaussian kernel $K(p,q) = \exp(-\gamma \|p-q\|^2)$, where $\gamma >
0$ is a specified hyperparameter.

The vector $x^*$ is a solution to the dual problem~\cite{Burges1998}
\begin{equation*}
    \min_x \ f(x) = \frac{1}{2} x^t Hx - e^tx \quad
    \text{s.t.} \quad
    y^tx = 0, \quad 0 \leq x \leq Ce,
\end{equation*}
where $e$ is a vector of ones, $C > 0$ is a hyperparameter, and $H$ is the $n
\times n$ symmetric matrix with entries $H_{ij} = y_i y_j K(z^i, z^j)$. Since
$H$ is not diagonal, the problem above is not in the form of~\eqref{CQK}.
However, a projected gradient-type iteration 
\begin{equation}
    x^{k+1} = x^k + t_k d^k, \qquad 
    d^k = P_{\Omega}(x^k - \lambda_k \nabla f(x^k)) - x^k,
    \label{svm_graditer}
\end{equation}
where $t_k, \lambda_k > 0$ and $\Omega = \{x \in \mathbb{R}^n \mid y^tx = 0, \
0 \leq x \leq Ce\}$, requires computing the projection of $x^k - \lambda_k
\nabla f(x^k)$ onto $\Omega$. This corresponds exactly to solving the CQK
subproblem
\begin{equation}\label{svmsubprob}
    \min_x \ \frac{1}{2} x^t Ix -(x^k - \lambda_k \nabla f(x^k))^tx \quad
    \text{s.t.} \quad
    y^tx = 0, \quad 0 \leq x \leq Ce.
\end{equation}
The gradient-type method employed in our experiments is the Spectral Projected
Gradient (SPG) method~\cite{Birgin2001,Birgin2000}. This method was selected
due to its excellent practical performance in solving large-scale constrained
problems. The stopping criterion is defined by the infinity norm of the
projected gradient at a given iterate being smaller than $10^{-4}$.

Following the approach in \citet{Cominetti2014}, this section primarily aims
to verify if utilizing the previous iterate $x^k$ to warm start
Algorithm~\ref{alg:purenewtonCQK} yields significant performance gains, as
discussed in Section~\ref{sec:warmstart}. Additionally, we investigate the
behavior and efficiency of the parallel implementations in this practical
application context.

For these experiments, we utilized two datasets:
\begin{enumerate}
    \item The \texttt{MNIST} dataset of handwritten digits, as previously 
    used in \citet{Cominetti2014}. We selected all 5,851 training samples 
    labeled as digit 8 and the first 5,851 occurrences of other digits, 
    totaling 11,702 samples. The objective is to design a classifier $F$ to 
    determine whether a $20 \times 20$ pixel image represents the digit 8;
    
    \item The Diabetes Health Indicators dataset from the U.S. Centers for 
    Disease Control and Prevention (``\texttt{cdc\_diabetes}''), which 
    contains health and lifestyle information for 253,680 individuals. Following 
    a similar sampling strategy, we selected the first 10,000 occurrences of 
    ``prediabetes or diabetes'' and the first 10,000 occurrences of 
    ``no diabetes'', resulting in a total of $20,000$ samples.
\end{enumerate}
The datasets were obtained using the Julia package \texttt{OpenML.jl} 
(\url{https://github.com/JuliaAI/OpenML.jl}) under IDs 554 and 46,598, 
respectively. Detailed descriptions are available at \url{https://www.openml.org}.

Regarding the hyperparameters, we set $\gamma = 0.007$ and $C = 5.0$ for the
\texttt{MNIST} dataset, and $\gamma = 0.5$ and $C = 10.0$ for
\texttt{cdc\_diabetes}. These values were obtained using cross validation and
yield high-accuracy models on the training data, with a misclassification rate
of less than 0.5\%.

Figure~\ref{fig:svmspeedup} illustrates the relative performance of the
\texttt{NewtonCQK.jl} and CMS implementations of
Algorithm~\ref{alg:purenewtonCQK} with variable fixing and warm start,
compared to the \texttt{NewtonCQK.jl} implementation without warm start
(initialized according to~\eqref{CQKlambda0}). The figure depicts the behavior
over the first and last 100 SPG iterations, omitting the initial three
iterations due to highly discrepant values. We observe that using $x^k$ as the
initial guess significantly improves performance, especially during the final
iterations. As discussed in Section~\ref{sec:warmstart}, this is expected
because the subproblems~\eqref{svmsubprob} undergo only minor adjustments;
thus, the set $\{i \mid 0 < x_i^k < C\}$ used to compute $\lambda_0$ remains
mostly constant, identifying the active set at the solution. Our results
corroborate the findings in \citet{Cominetti2014}. Finally,
\texttt{NewtonCQK.jl} and CMS exhibit similar behavior, as they implement
essentially the same method, though results for the smaller \texttt{MNIST}
instance confirm that the CMS implementation is slightly more efficient.

\begin{figure}[ht]
\centering
\def\scale{0.49}
\foreach \problem in {mnist\_784,cdc\_diabetes}{
\subfigure{\includegraphics[width=\scale\textwidth]{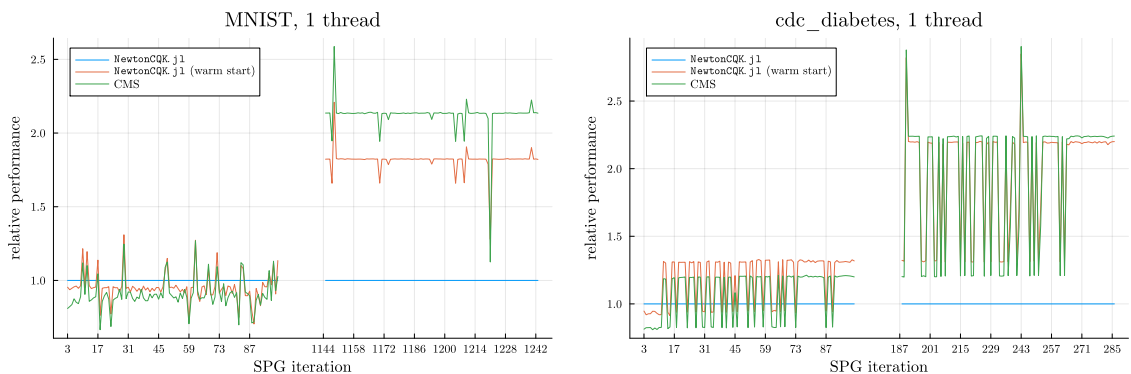}}
%\subfigure{\includegraphics[width=\scale\textwidth]{svm_\problem_nfixed_th1.pdf}}
} 
\caption{Relative performance of Algorithm~\ref{alg:purenewtonCQK} from
\texttt{NewtonCQK.jl} and CMS using variable fixing and warm starts to solve
\eqref{svmsubprob} along the SPG iterations. The baseline (horizontal blue
line at 1) is the performance of the~\texttt{NewtonCQK.jl} without warm start.}
\label{fig:svmspeedup}
\end{figure}

To conclude this subsection, we present the speedup of the parallel
implementation in \texttt{NewtonCQK.jl} in
Figure~\ref{fig:svmspeedupparallel}. In this context, multithreading proves
beneficial only for a modest number of threads -- specifically up to 4 for
both datasets -- given their relatively small scale. It is also important to
emphasize that the initial multiplier $\lambda_0$ computed
from~\eqref{CQKlambda0} is identical in both the multithreaded and sequential
implementations. This consistency contrasts with the projections onto the
simplex and $\ell_1$-ball, which we address in the following subsections.

% To end this subsection, we present in Figure~\ref{fig:svmspeedupparallel} the
% speedups of Algorithm~\ref{alg:purenewtonCQK} on \eqref{svmsubprob}
% using warm start. We observe that
% parallelism is beneficial even in this case, where only few iterations are
% performed. Naturally, as the number of iterations becomes small, higher
% speedups are expected with a moderate number of threads.
% In fact, in our tests the best performance was achieved using
% only 4 threads.
% Anyway, this gain emphasizes the fact that computing $\lambda_0$ by
% \eqref{CQKlambda0} regardless of whether data is divided into chunks
% leads to the same $\lambda_0$ in Algorithm~\ref{alg:purenewtonCQK}. This is not
% necessarily true for
% Algorithm~\ref{alg:lambda0simplex} since the computation of $\lambda_0$ is
% influenced by the fixed variables \emph{during} the computation of
% $\lambda_0$ itself. Thus, the final $\lambda_0$ is subject to the
% variables \emph{fixed within each chunk separately}, which occasionally
% gives a poorly $\lambda_0$ when compared to the one computed sequentially. Even
% so, we show in section \ref{sec:basispursuit} that warm start can be beneficial
% in the latter case too.

\begin{figure}[ht]
\centering
\def\scale{0.49}
\foreach \problem in {mnist\_784,cdc\_diabetes}{
	\subfigure{\includegraphics[width=\scale\textwidth]{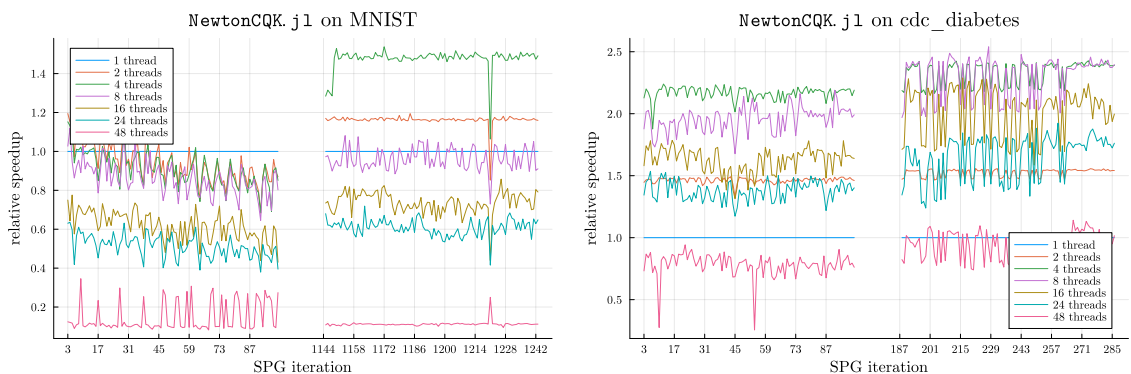}}
}
\caption{Speedups of parallel Algorithm \ref{alg:purenewtonCQK} for solving
\eqref{svmsubprob} in relation to the sequential algorithm. All algorithms
were executed using the current SPG iterate as warm start.}
\label{fig:svmspeedupparallel}
\end{figure}

\subsection{Projection onto simplex and $\ell_1$ ball}
\label{sec:simplex}

\subsubsection{Randomly generate instances}

In this section, we present numerical results for problem~\eqref{PDelta}. 
Following the methodology in \citet{Dai2024} and references therein, we 
consider three classes of test problems based on the distribution of 
the components of $y$:
\begin{enumerate}
    \item $y_i \sim U(0,1)$ for each $i$;
    \item $y_i \sim N(0,1)$, the standard normal distribution;
    \item $y_i \sim N(0,10^{-3})$ for each $i$.
\end{enumerate}
As in the previous experiments, the problem dimension $n$ ranges from $10^3$ 
to $10^9$, with 20 instances generated for each configuration. In the rare 
event that a value $y_i = 0$ is generated, the entire instance is 
randomized again.

We compare below the implementation of Algorithm~\ref{alg:purenewtonsimplex}
from \texttt{NewtonCQK.jl} with Condat's original implementation of his method
and with the implementation by Dai and Chen, which also features a parallel
variant. It is important to point out that Condat's implementation returns a
dense vector as the solution, even when the majority of its entries are zero
(at the lower bound). In contrast, Dai and Chen exploit the expected sparsity
of the solution to avoid allocating and filling a large dense vector,
returning a sparse representation instead. The \texttt{NewtonCQK.jl}
implementation allows the user to specify the desired return type, hence we
present results for both options.

Figures~\ref{fig:simplexspeedup1} and~\ref{fig:simplexspeedup3} present the 
speedup results for problem types 1 and 3 with $n \geq 10^4$; results for 
type 2 are omitted as they are qualitatively similar to those of type 1. 
Condat's method serves as the baseline for all comparisons. By including 
results for the sequential implementations (running on a single thread) 
within these figures, we facilitate a direct comparison of the algorithmic 
efficiency of each method on a single core alongside their parallel 
scaling behavior.

\begin{figure}[ht]
\centering
\def\scale{0.49}
\foreach \n in {10000,100000,1000000,10000000,100000000,1000000000}{
	\subfigure{\includegraphics[width=\scale\textwidth]{speedup_Random_1_\n.pdf}}
	%\subfigure{\includegraphics[width=\scale\textwidth]{speedup__Random 2_\n.pdf}}
	%\subfigure{\includegraphics[width=\scale\textwidth]{speedup__Random 3_\n.pdf}}
}
\caption{Speedups of Algorithm~\ref{alg:purenewtonsimplex} implemented
	in~\texttt{NewtonCQK.jl} (CPU versions returning dense and sparse
	solutions), Dai and Chen's (sparse solution) and Condat's (dense solution)
	algorithms on \eqref{PDelta}, instances of type~1. The base algorithm is
	of Condat, which is represented by the horizontal line.}
\label{fig:simplexspeedup1}
\end{figure}

\begin{figure}[ht]
\centering
\def\scale{0.49}
\foreach \n in {10000,100000,1000000,10000000,100000000,1000000000}{
	%\subfigure{\includegraphics[width=\scale\textwidth]{speedup__Random 1_\n.pdf}}
	%\subfigure{\includegraphics[width=\scale\textwidth]{speedup__Random 2_\n.pdf}}
	\subfigure{\includegraphics[width=\scale\textwidth]{speedup_Random_3_\n.pdf}}
}
\caption{Speedups of Algorithm~\ref{alg:purenewtonsimplex} implemented
	in~\texttt{NewtonCQK.jl} (CPU versions returning dense and sparse
	solutions), Dai and Chen's (sparse solution) and Condat's (dense solution)
	algorithms on \eqref{PDelta}, instances of type~3. The base algorithm is
	of Condat, which is represented by the horizontal line.}
\label{fig:simplexspeedup3}
\end{figure}

We observe that both \texttt{NewtonCQK.jl} implementations of
Algorithm~\ref{alg:purenewtonsimplex} (dense and sparse output) outperforms
Condat's original C implementation even in single-threaded mode. Regarding
parallel performance, we again observe no significant gains for small-scale
problems ($n \leq 10^4$). For larger instances, the speedups achieved by the
\texttt{NewtonCQK.jl} variant returning sparse vectors are consistently higher
than those attained by Dai and Chen's algorithm. Notably, even the
\texttt{NewtonCQK.jl} variant that produces dense vectors outperforms Dai and
Chen's method on large-scale problems. For $n \geq 10^8$, the performance
improvement when increasing the thread count from 16 to 24 is less pronounced.
This suggests that for very large problem sizes, the parallel workers may be
reaching a point of diminishing returns, where computational overhead and
memory access patterns become the primary limiting factors.

It is important to emphasize that the performance advantages demonstrated by
the \texttt{NewtonCQK.jl} implementation stem not only from algorithmic
differences -- specifically the pure Newton approach versus variable fixing --
but also from strategic choices in the data structures used to represent free
variables. Condat's and Dai and Chen's methods maintain the $y_i$ values of
the unfixed variables throughout the iterations. In contrast,
\texttt{NewtonCQK.jl}, following the logic of the CMS code for the general
problem~\eqref{CQK}, tracks the indices of the free variables instead. The
former approach is advantageous for maintaining data compactness in memory
during the iterative process; however, it incurs a computational penalty at
the conclusion, as a full scan of the problem data is required to construct
the final output. The latter approach may result in more dispersed memory
access during iterations, but it retains explicit knowledge of the few
potentially non-zero values at the solution, thereby avoiding a costly final
full scan.

Finally, Table~\ref{tab:simplexgpu} presents the results for the GPU MapReduce
implementation of Algorithm~\ref{alg:purenewtonsimplex}. We recall that due to
architectural constraints, this implementation utilizes the standard
multiplier initialization derived from~\eqref{CQKlambda0} and cannot
incorporate the algorithmic refinements described in
Section~\ref{sec:newtonsimplex}. Consequently, the GPU only consistently
outperforms the single-threaded CPU implementation once the problem size
reaches $n \geq 10^7$. Even in these instances, the gains are modest; the GPU
version is at most six times faster than the single-threaded CPU
implementation and never matches the performance of the 8-threaded CPU
version. A key factor in this behavior is the iteration count: the GPU version
can require up to five times more iterations than the Newton method using the
initialization in Algorithm~\ref{alg:lambda0simplex}, with the discrepancy
increasing for larger problems. This further demonstrates the effectiveness of
the alternative multiplier initialization introduced by Condat. Nevertheless,
a GPU implementation remains highly valuable for integration into workflows
that reside entirely on the GPU, as it eliminates costly data transfers
between the GPU and CPU, as seen in \citet{Behling2025}.

\begin{table}[!ht]
\footnotesize
\begin{tabular*}{\columnwidth}{@{\extracolsep\fill}llllllll@{\extracolsep\fill}}
\hline
&  & \multicolumn{2}{l}{Random 1} & \multicolumn{2}{l}{Random 2} & \multicolumn{2}{l}{Random 3}\\
& & CPU & GPU & CPU & GPU & CPU & GPU \\
$n$ & th  & time (ms) & relative & time (ms) & relative & time (ms) & relative \\
\hline
$10^{3}$ & 1 & 1e$-$03 (5.0) &   0.0 (8.2) & 7e$-$04 (2.5) &   0.0 (9.2) & 3e$-$03 (3.1) &   0.0 (4.5) \\
$10^{3}$ & 2 & 2e$-$02 (5.3) &   0.0 (8.2) & 2e$-$02 (3.4) &   0.0 (9.2) & 3e$-$02 (3.3) &   0.1 (4.5) \\
$10^{3}$ & 4 & 2e$-$02 (5.7) &   0.0 (8.2) & 2e$-$02 (4.3) &   0.0 (9.2) & 4e$-$02 (3.4) &   0.1 (4.5) \\
$10^{3}$ & 8 & 5e$-$02 (6.0) &   0.1 (8.2) & 4e$-$02 (5.2) &   0.1 (9.2) & 7e$-$02 (3.4) &   0.2 (4.5) \\
$10^{3}$ & 16 & 1e$-$01 (6.2) &   0.2 (8.2) & 8e$-$02 (5.5) &   0.2 (9.2) & 1e$-$01 (3.4) &   0.4 (4.5) \\
$10^{3}$ & 24 & 2e$-$01 (6.3) &   0.4 (8.2) & 2e$-$01 (6.0) &   0.3 (9.2) & 2e$-$01 (3.4) &   0.7 (4.5) \\
$10^{3}$ & 48 & 6e$-$01 (6.8) &   1.2 (8.2) & 5e$-$01 (6.5) &   1.0 (9.2) & 6e$-$01 (3.4) &   2.0 (4.5) \\
\hline
$10^{4}$ & 1 & 7e$-$03 (5.8) &   0.0 (10.1) & 6e$-$03 (3.5) &   0.0 (11.7) & 2e$-$02 (5.0) &   0.0 (7.0) \\
$10^{4}$ & 2 & 3e$-$02 (6.1) &   0.0 (10.1) & 2e$-$02 (4.2) &   0.0 (11.7) & 7e$-$02 (5.3) &   0.1 (7.0) \\
$10^{4}$ & 4 & 3e$-$02 (6.3) &   0.0 (10.1) & 2e$-$02 (5.1) &   0.0 (11.7) & 8e$-$02 (5.8) &   0.1 (7.0) \\
$10^{4}$ & 8 & 4e$-$02 (6.9) &   0.0 (10.1) & 4e$-$02 (5.7) &   0.0 (11.7) & 1e$-$01 (6.0) &   0.1 (7.0) \\
$10^{4}$ & 16 & 1e$-$01 (7.1) &   0.1 (10.1) & 8e$-$02 (6.2) &   0.1 (11.7) & 2e$-$01 (6.0) &   0.2 (7.0) \\
$10^{4}$ & 24 & 2e$-$01 (7.2) &   0.2 (10.1) & 2e$-$01 (6.6) &   0.1 (11.7) & 2e$-$01 (6.0) &   0.4 (7.0) \\
$10^{4}$ & 48 & 6e$-$01 (7.8) &   0.7 (10.1) & 5e$-$01 (7.2) &   0.5 (11.7) & 6e$-$01 (6.0) &   1.0 (7.0) \\
\hline
$10^{5}$ & 1 & 7e$-$02 (6.8) &   0.1 (12.1) & 6e$-$02 (3.0) &   0.0 (14.0) & 1e$-$01 (6.3) &   0.1 (9.1) \\
$10^{5}$ & 2 & 8e$-$02 (7.0) &   0.1 (12.1) & 4e$-$02 (4.5) &   0.0 (14.0) & 1e$-$01 (6.9) &   0.2 (9.1) \\
$10^{5}$ & 4 & 7e$-$02 (7.3) &   0.1 (12.1) & 3e$-$02 (5.5) &   0.0 (14.0) & 1e$-$01 (7.0) &   0.2 (9.1) \\
$10^{5}$ & 8 & 9e$-$02 (7.8) &   0.1 (12.1) & 5e$-$02 (6.0) &   0.0 (14.0) & 2e$-$01 (7.2) &   0.2 (9.1) \\
$10^{5}$ & 16 & 1e$-$01 (8.1) &   0.1 (12.1) & 9e$-$02 (6.7) &   0.1 (14.0) & 2e$-$01 (7.7) &   0.2 (9.1) \\
$10^{5}$ & 24 & 2e$-$01 (8.2) &   0.2 (12.1) & 1e$-$01 (7.0) &   0.1 (14.0) & 2e$-$01 (7.9) &   0.3 (9.1) \\
$10^{5}$ & 48 & 7e$-$01 (8.6) &   0.7 (12.1) & 5e$-$01 (7.7) &   0.4 (14.0) & 7e$-$01 (8.0) &   0.9 (9.1) \\
\hline
$10^{6}$ & 1 & 8e$-$01 (7.7) &   0.6 (14.0) & 6e$-$01 (3.8) &   0.4 (16.4) & 2e$+$00 (7.7) &   1.9 (11.6) \\
$10^{6}$ & 2 & 5e$-$01 (8.0) &   0.4 (14.0) & 2e$-$01 (4.9) &   0.2 (16.4) & 2e$+$00 (8.0) &   1.4 (11.6) \\
$10^{6}$ & 4 & 3e$-$01 (8.1) &   0.3 (14.0) & 1e$-$01 (5.7) &   0.1 (16.4) & 1e$+$00 (8.2) &   0.9 (11.6) \\
$10^{6}$ & 8 & 3e$-$01 (8.5) &   0.2 (14.0) & 1e$-$01 (6.5) &   0.1 (16.4) & 7e$-$01 (8.7) &   0.7 (11.6) \\
$10^{6}$ & 16 & 2e$-$01 (8.9) &   0.2 (14.0) & 1e$-$01 (7.1) &   0.1 (16.4) & 5e$-$01 (9.0) &   0.4 (11.6) \\
$10^{6}$ & 24 & 3e$-$01 (9.0) &   0.2 (14.0) & 2e$-$01 (7.4) &   0.1 (16.4) & 3e$-$01 (9.0) &   0.3 (11.6) \\
$10^{6}$ & 48 & 7e$-$01 (9.2) &   0.5 (14.0) & 5e$-$01 (7.8) &   0.3 (16.4) & 9e$-$01 (9.3) &   0.8 (11.6) \\
\hline
$10^{7}$ & 1 & 8e$+$00 (8.2) &   3.2 (16.0) & 6e$+$00 (4.0) &   2.1 (18.8) & 1e$+$01 (8.8) &   5.4 (13.9) \\
$10^{7}$ & 2 & 4e$+$00 (8.6) &   1.7 (16.0) & 2e$+$00 (5.4) &   0.8 (18.8) & 6e$+$00 (9.0) &   3.3 (13.9) \\
$10^{7}$ & 4 & 3e$+$00 (9.0) &   1.1 (16.0) & 1e$+$00 (6.0) &   0.5 (18.8) & 4e$+$00 (9.1) &   2.2 (13.9) \\
$10^{7}$ & 8 & 2e$+$00 (9.2) &   0.8 (16.0) & 1e$+$00 (6.8) &   0.5 (18.8) & 3e$+$00 (9.8) &   1.6 (13.9) \\
$10^{7}$ & 16 & 2e$+$00 (9.6) &   0.7 (16.0) & 8e$-$01 (7.5) &   0.3 (18.8) & 2e$+$00 (10.0) &   1.2 (13.9) \\
$10^{7}$ & 24 & 1e$+$00 (10.0) &   0.4 (16.0) & 9e$-$01 (7.8) &   0.3 (18.8) & 2e$+$00 (10.0) &   1.0 (13.9) \\
$10^{7}$ & 48 & 3e$+$00 (10.0) &   1.2 (16.0) & 1e$+$00 (8.4) &   0.5 (18.8) & 3e$+$00 (10.6) &   1.4 (13.9) \\
\hline
$10^{8}$ & 1 & 7e$+$01 (8.8) &   4.8 (17.6) & 6e$+$01 (5.1) &   3.6 (20.8) & 7e$+$01 (10.0) &   6.0 (16.0) \\
$10^{8}$ & 2 & 3e$+$01 (8.9) &   2.2 (17.6) & 2e$+$01 (5.3) &   1.4 (20.8) & 4e$+$01 (10.0) &   3.0 (16.0) \\
$10^{8}$ & 4 & 2e$+$01 (9.3) &   1.2 (17.6) & 1e$+$01 (6.1) &   0.7 (20.8) & 2e$+$01 (10.5) &   1.8 (16.0) \\
$10^{8}$ & 8 & 1e$+$01 (9.9) &   0.8 (17.6) & 7e$+$00 (6.7) &   0.4 (20.8) & 2e$+$01 (11.0) &   1.2 (16.0) \\
$10^{8}$ & 16 & 7e$+$00 (10.0) &   0.5 (17.6) & 5e$+$00 (7.7) &   0.3 (20.8) & 1e$+$01 (11.0) &   0.9 (16.0) \\
$10^{8}$ & 24 & 8e$+$00 (10.1) &   0.5 (17.6) & 6e$+$00 (8.0) &   0.4 (20.8) & 1e$+$01 (11.3) &   0.8 (16.0) \\
$10^{8}$ & 48 & 9e$+$00 (10.9) &   0.7 (17.6) & 6e$+$00 (8.6) &   0.4 (20.8) & 1e$+$01 (11.9) &   0.8 (16.0) \\
\hline
$10^{9}$ & 1 & 6e$+$02 (9.3) &   4.7 (19.0) & 6e$+$02 (4.7) &   3.6 (23.4) & 6e$+$02 (11.0) &   5.1 (18.0) \\
$10^{9}$ & 2 & 3e$+$02 (9.7) &   2.0 (19.0) & 2e$+$02 (5.5) &   1.4 (23.4) & 3e$+$02 (11.0) &   2.2 (18.0) \\
$10^{9}$ & 4 & 1e$+$02 (10.0) &   1.1 (19.0) & 1e$+$02 (6.5) &   0.7 (23.4) & 2e$+$02 (11.7) &   1.2 (18.0) \\
$10^{9}$ & 8 & 1e$+$02 (10.1) &   0.7 (19.0) & 7e$+$01 (7.0) &   0.4 (23.4) & 1e$+$02 (12.0) &   0.9 (18.0) \\
$10^{9}$ & 16 & 6e$+$01 (10.9) &   0.5 (19.0) & 5e$+$01 (8.0) &   0.3 (23.4) & 7e$+$01 (12.0) &   0.5 (18.0) \\
$10^{9}$ & 24 & 6e$+$01 (11.0) &   0.5 (19.0) & 5e$+$01 (8.2) &   0.3 (23.4) & 6e$+$01 (12.4) &   0.5 (18.0) \\
$10^{9}$ & 48 & 6e$+$01 (11.0) &   0.4 (19.0) & 5e$+$01 (8.8) &   0.3 (23.4) & 6e$+$01 (13.0) &   0.5 (18.0) \\
\hline
\end{tabular*}
\caption{Comparison between CPU and GPU versions of Algorithm
\ref{alg:purenewtonsimplex} implemented in \texttt{NewtonCQK.jl} using 64 bit
floating-point. Times are in milliseconds, and were taken as the median over
the 20 generated instances. The values between parentheses are the mean of
iterations.}
\label{tab:simplexgpu}
\end{table}

\subsubsection{Basis pursuit}
\label{sec:basispursuit}

Let $A \in \mathbb{R}^{m \times n}$ be a matrix and a threshold $r > 0$. The
basis pursuit denoising problem~\cite{Chen2001}, or
LASSO~\cite{Tibshirani1996}, has its alternative constrained formulation given
by
\begin{equation}\label{l1ballprojbp}
    \min_x \ \frac{1}{2} \|Ax-b\|^2 \quad
    \text{s.t.} \quad
    \|x\|_1 \leq r.
\end{equation}
As in Section~\ref{sec:svm}, we employ the SPG method to solve this problem. 
Each SPG iteration~$k$ requires computing a projection onto the $\ell_1$-ball, 
defined by the subproblem
\begin{equation}\label{l1ballprojsubprob}
    \min_x \ \frac{1}{2} \|x - [x^k - \alpha_k A^t (Ax^k - b)]\|^2 \quad
    \text{s.t.} \quad
    \|x\|_1 \leq r,
\end{equation}
where $\alpha_k$ is the spectral step size, see~\eqref{svm_graditer}. The
algorithm is initialized at the origin, $x^0 = 0$. The iterations terminate
when the infinity norm of the projected gradient falls below the tolerance
$10^{-4}$.

We select two matrices from the collection compiled in Table~3 from
\citet{Lopes2019}: $SC_{1}$ ($m=32,768$; $n=65,536$) and $SC_{11}$ ($m=800$;
$n=88,119$), along with their provided right-hand sides $b$. We set the radius
$r$ to 6,717 and 34, respectively. These are the smallest integer values for
which SPG returns a point whose residual of the linear system $Ax = b$ has
Euclidean norm smaller than $10^{-2}$. Such a choice leads to a sparse
solution for the $SC_{11}$ instance, but not for $SC_{1}$. This difference
unveils distinct behaviors of the warm-start strategy, as discussed below. Our
analysis focuses on the performance gains achieved by using $x^k$ as the
initial guess (warm start) in the specialized
Algorithm~\ref{alg:purenewtonCQK} for solving the projection subproblem, as
well as the potential benefits of parallelization.

Figure~\ref{fig:basispursuitspeedup} illustrates the relative performance of 
the sequential algorithms, using the \texttt{NewtonCQK.jl} implementation 
without warm start as the baseline. As in previous cases, we report results 
for the first and last 100 SPG iterations. For a more comprehensive 
comparison, we include the Julia implementation of Condat's method by Dai and 
Chen; this allows for a more integrated and consistent comparison within our 
SPG framework, which is developed in the same language. To ensure a fair 
evaluation, the \texttt{NewtonCQK.jl} implementation is configured to return 
sparse vectors, matching the default output of the Dai and Chen code.

\begin{figure}[ht]
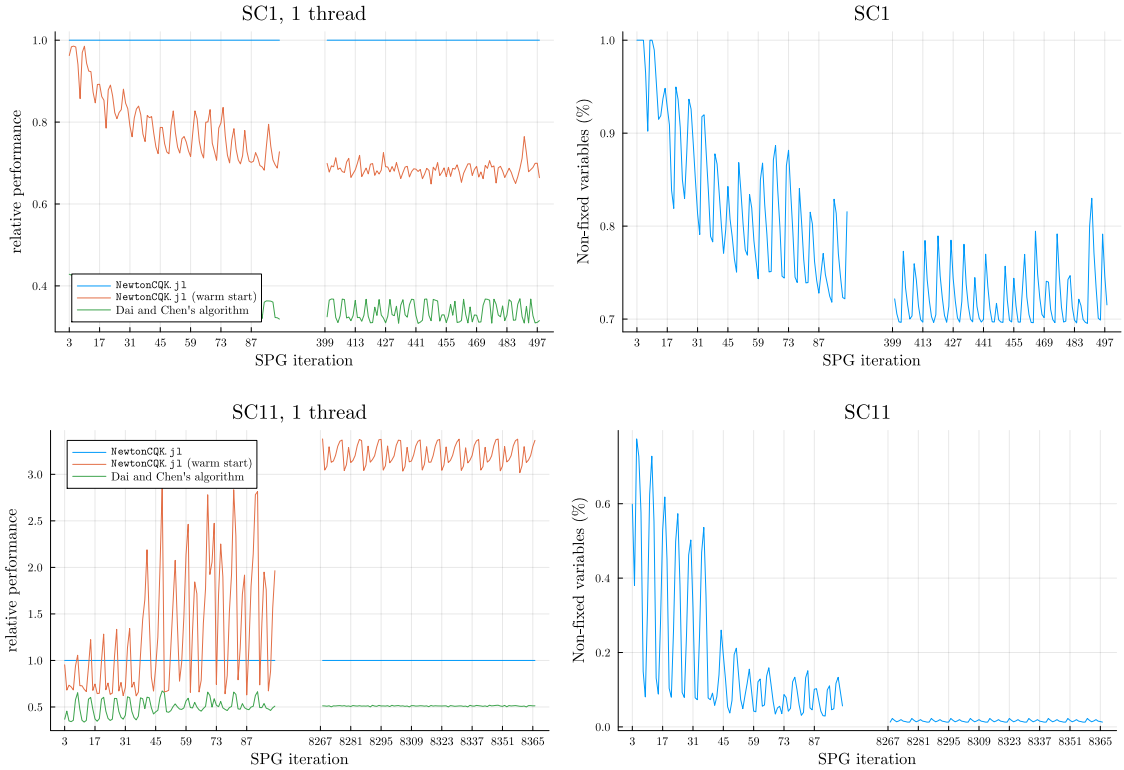

\centering
\def\scale{0.49}
\foreach \problem in {SClog1.mat,SClog11.mat}{
\subfigure{\includegraphics[width=\scale\textwidth]{bp_\problem_time_th1.pdf}}
\subfigure{\includegraphics[width=\scale\textwidth]{bp_\problem_nfixed_th1.pdf}}
}
\caption{On the left, relative performace of CPU algorithms that compute
sparse projections onto the $\ell_1$-ball along the SPG iterations.
\texttt{NewtonCQK.jl} implementation of the specialized
Algorithm~\ref{alg:purenewtonCQK} with no warm start is the base (horizontal
line). On the right, the percentage of non-fixed variables of SPG iterates.}
\label{fig:basispursuitspeedup}
\end{figure}

The \texttt{NewtonCQK.jl} implementations outperform Dai and Chen's code 
in all scenarios, which confirms the results presented in 
Section~\ref{sec:simplex}. Indeed, the projection onto the $\ell_1$-ball 
involves computing a projection onto the simplex as a sub-procedure. 
Finally, we note that the implementation by Dai and Chen does not 
support a warm start strategy.

We observe that the warm start is beneficial only for the $SC_{11}$ case but
not for $SC_1$. In $SC_1$, the solution is almost fully dense and the sparsity
structure seems to greately vary, even in the final iterations. In this case,
using the previous iterate to guess the initial multiplier is not effective,
as the free variable set is not stabilizing. On the other hand, for $SC_{11}$
the solution is sparse and the variation of the free variable set is smaller
leading to big gains at the final projections. 

The performance of the \texttt{NewtonCQK.jl} parallel implementation is shown
in Figure~\ref{fig:basispursuitspeedupparallel}. As observed in
Section~\ref{sec:svm}, the performance gain from parallelism is consistent
throughout the SPG execution, but remains limited to 8 to 16 threads due to the
modest problem sizes. Notably, higher speedups are observed for the $SC_{11}$
instance, where the iterates are highly sparse.

\begin{figure}[ht]
\centering
\def\scale{0.49}
\foreach \matrix in {SClog1,SClog11}{
\subfigure{\includegraphics[width=\scale\textwidth]{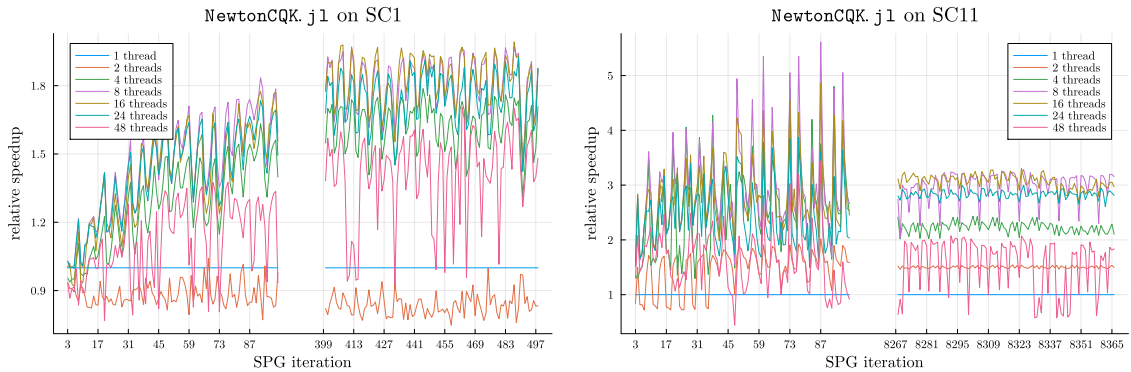}}
}
\caption{Speedups of the parallel implementation of the specialized
Algorithm~\ref{alg:purenewtonCQK} from \texttt{NewtonCQK.jl} for
solving~\eqref{l1ballprojsubprob} (returning sparse solutions) in relation to
the sequential variant along the SPG iterations. All algorithms were executed
using the current SPG iterate as warm start.}
\label{fig:basispursuitspeedupparallel}
\end{figure}

% \subsection{About accuracy on large problems}
% \label{sec:numericaldifficulties}

% The computation of $\varphi$ and its derivatives may suffer from inaccuracy when
% $n\gg 1$ and/or the floating-point precision used is not sufficiently high.
% To illustrate, consider the lateral derivative $\varphi_+$ when projecting onto
% the simplex, which consists of summing 1's ($b_i=d_i=1$ in \eqref{phideriv}).
% For simplicity, let us suppose that $\varphi_+(\lambda) = \sum_{i=1}^n 1$ and
% that we are using 32-bit floating point arithmetic. Summing 1 sequentially, the
% result freezes at $N = 1+\sum_{i=0}^{23} 2^i = 1.6777216\times 10^7$ because,
% with this precision, $1 + N$ is numerically equal to $1=1\times 10^0$ (Float32
% reserves 23 bits for the mantissa, 8 bits for the exponent and 1 bit for the
% sign).

% To mitigates this effect, one can use a numeric type with higher precision.
% Another way is to perform the sum in blocks. For instance, consider $n = 10^8 > N+1$
% in our simple example. Dividing data into six balanced parts and summing 1's
% separately, we obtain $\approx 1.6666667\times 10^7 < N$ for each part. Then,
% by summing the subtotals, we recover $10^8$ since the sum is between
% floating point numbers with the same exponent 7. This is exactly what the
% parallel versions of our algorithms do (see section \ref{sec:parallelNewton}),
% so they naturally entail large problems.

\section{Conclusions}
\label{sec:conclusions}

In this paper, we revisited the semi-smooth Newton method proposed
in \citet{Cominetti2014} for solving the continuous quadratic knapsack
problem~\eqref{CQK}, a fundamental model that arises in various optimization
contexts. We discussed how this Newton method can be efficiently parallelized,
with a particular focus on projections onto the simplex and $\ell_1$-ball. Our
open-source Julia package, \texttt{NewtonCQK.jl}, provides high-performance
CPU and GPU implementations that are both flexible and easy to integrate.
Extensive numerical experiments demonstrate that our implementation
outperforms state-of-the-art algorithms, including the widely adopted
variable-fixing method of \citet{Condat2016}. Furthermore, we showcased
the scalability of our approach for large-scale problems; notably, our GPU
implementation achieves speedups of 45--186 times compared to its sequential
CPU counterpart for problems with one million variables or more.

\section*{Declaration of competing interest}

The authors declare that they have no known competing financial interests or
personal relationships that could have appeared to influence the work reported
in this paper.

\section*{Acknowledgments}

\FUNDING
This research used computational resources from CENAPAD-SP (Centro Nacional de
Processamento de Alto Desempenho em São Paulo), Unicamp / FINEP - MCTI. 

\journalstyle{
	\clearpage
	\printbibliography
	% \bibliography{refs}
	% % BibTeX users please use one of
	% \bibliographystyle{spbasic}      % basic style, author-year citations
	% \bibliographystyle{spmpsci}      % mathematics and physical sciences
	% \bibliographystyle{spphys}       % APS-like style for physics
	% \bibliography{}   % name your BibTeX data base
}

\simplestyle{
	\clearpage
	\printbibliography
}

\ejorstyle{
	\clearpage
	\printbibliography
}

\end{document}